\newcommand{\lvt}{\left|\kern-1.35pt\left|\kern-1.3pt\left|}
\newcommand{\rvt}{\right|\kern-1.3pt\right|\kern-1.35pt\right|}
\newtheorem{thm}{Theorem}[section]
\newtheorem{cor}[thm]{Corollary}
\newtheorem{lem}[thm]{Lemma}
\newtheorem{prop}[thm]{Proposition}
\theoremstyle{remark}
 \def\la{{\langle}}
 \def\ra{{\rangle}}
 \def\ve{{\varepsilon}}
 \def\d{\mathrm{d}}
 \def\sd{{\mathsf d}}
 \def\sw{{\mathsf w}}
 \def\sB{{\mathsf B}}
 \def\a{{\alpha}}
 \def\b{{\beta}}
 \def\g{{\gamma}}
 \def\k{{\kappa}}
 \def\t{{\theta}}
 \def\l{{\lambda}}
 \def\la{{\langle}}
 \def\ra{{\rangle}}
 \def\ve{{\varepsilon}}
 \def\kb{{\mathbf k}}
 \def\CD{{\mathcal D}}
 \def\CV{{\mathcal V}}
 \def\BB{{\mathbb B}}
 \def\NN{{\mathbb N}}
 \def\RR{{\mathbb R}}
      \def\proj{\operatorname{proj}}
\def\lla{\langle{\kern-2.5pt}\langle}      
\def\rra{\rangle{\kern-2.5pt}\rangle}
\def\bk{{\boldsymbol{\kappa}}}
\newcommand{\wh}{\widehat}
\def\f{\frac}
\begin{document}

\title{Sharp Bernstein inequalities on simplex}
\author{Yan Ge}
\address{Department of Mathematics, University of Oregon, Eugene, 
OR 97403--1222, USA}
\email{yge7@uoregon.edu} 
\author{Yuan~Xu}
\address{Department of Mathematics, University of Oregon, Eugene, 
OR 97403--1222, USA}
\email{yuan@uoregon.edu} 
\thanks{The second author was partially supported by Simons Foundation Grant \#849676.}
\date{\today}  
\subjclass[2010]{41A10, 41A63, 42C10, 42C40}.
\keywords{Bernstein inequality, polynomials, simplex, spectral operator, doubling weight}

\begin{abstract}  
We prove several new families of Bernstein inequalities of two types on the simplex. The first type consists of
inequalities in $L^2$ norm for the Jacobi weight, some of which are sharp, and they are established via the 
spectral operator that has orthogonal polynomials as eigenfunctions. The second type consists of inequalities 
in $L^p$ norm for doubling weight on the simplex. The first type is not necessarily a special case of the
second type when $d \ge 3$. 
\end{abstract}

\maketitle
 
\section{Introduction}
\setcounter{equation}{0}

We revisit Bernstein inequalities on the simplex in order to understand a new family of Bernstein
inequalities on the triangle that appeared recently in \cite{X23}. Let $\triangle^d$ be the simplex defined by 
$$
  \triangle^d := \left\{x \in \RR^d: x_1 \ge 0, \ldots, x_d \ge 0, \, |x| \le 1 \right\}, \quad |x|:= x_1+\cdots + x_d. 
$$
For $\bk = (\k_1,\ldots, \k_{d+1})$, let $W_\bk$ be the Jacobi weight function on the simplex,
$$
  W_\bk(x) := x_1^{\k_1} \cdots x_d^{\k_d} (1-|x|)^{\k_{d+1}}, \qquad \k_i > -1, \quad i =1, 2, \ldots, d+1.
$$
Denote by $\partial_j$ the $j$-th partial derivative. If $f$ is a polynomials of degree at most $n$ on $\RR^d$, 
then we have the Bernstein inequality \cite{BX, Dit, DT}:  for $1 \le p \le \infty$ and $1\le j \le d$,
\begin{equation} \label{eq:oldBI}
    \| \psi_j \partial_j f \|_{\bk,p}  \le c\,n \|f\|_{\bk,p} \quad\hbox{with}\quad \psi_j (x) =\sqrt{x_j (1-|x|)},  
\end{equation}
where $\|\cdot\|_{\bk,p}$ denotes the $L^p(\triangle^d, W_\bk)$ norm for $1\le p < \infty$ and uniform norm
if $p=\infty$, which agrees with the classical Bernstein inequality on $[0,1]$ when $d =1$. The factor $\psi_j$ 
in front of $\partial_j$ is a trade mark of Bernstein inequalities; see, for example, \cite{Br, DP, DT, DX, K2009, X23}. 
Recently, a new set of inequalities on the triangle, $\triangle^d$ when $d =2$, appeared in 
\cite{X23}, which shows in particular that, for $i, j = 1,2$ and $i \ne j$,  
\begin{align} \label{eq:B-ineqTri}
 \left \|  \frac{1}{\sqrt{1-x_j} } \psi_i \partial_i f \right \|_{p,w} \le c\, n \|f\|_{p,w},
\end{align}
which is evidently stronger than that of \eqref{eq:oldBI}. The inequality \eqref{eq:B-ineqTri} came out as 
a byproduct in \cite{X23}, derived from the Bernstein inequalities on the conic domains. It raises several
questions. 

One obvious question is a possible extension of \eqref{eq:B-ineqTri} to the simplex $\triangle^d$. The answer
is positive but it turns out, somewhat surprisingly, that there are several families of extensions. Another question
is a possible geometric interpretation of the new factor in front of the derivative. In a recent paper, Kro\'o 
\cite{K2023} establishes the Bernstein inequality on polyhedra, including the simplex as a special case, with $\psi_j$ 
bing the Euclidean distance from $x$ to the boundary of the domain. As we shall show later, however, our $\psi_{j}$ 
has a geometric interpretation but it is not via Euclidean distance. One more question arises from the link 
between the Bernstein inequality in $L^2$ norm and the spectral operator, see \cite{K2022, X23}, where the latter 
is a second-order linear differential operator on the domain that has orthogonal polynomials as eigenfunctions. 
For the simplex, the spectral operator is given by \cite[section 5.3]{DX}
\begin{align}\label{eq:Dk}
  \CD_\bk =  \sum_{i=1}^d x_i(1-x_i) \frac{\partial}{ \partial x_i^2}  
    & - 2 \sum_{1 \le i< j \le d} x_i x_j  \frac{\partial}{\partial x_i x_j} \\
    &  + \sum_{i=1}^d \big(\k_i +1 - (|\bk|+d+1) x_i \big) \frac{\partial}{\partial x_i} \notag
\end{align} 
and the Bernstein inequality \eqref{eq:oldBI} in the $L^2$ norm can be derived from the self-adjoint integration 
of $\CD_\bk$, in which $\phi_j \partial_j$ appears (see \eqref{eq:self-adj} below). One immediate question is if 
and how the new inequality \eqref{eq:B-ineqTri} is related to the spectral operator. 

The answer to the last question is somewhat surprising: it turns out that $\CD_\bk$ can be written in several other 
symmetric forms that lead to self-adjoint 
integrations of $\CD_\bk$. Each of the new symmetric forms contains fractions such as the one in 
front of $\partial_j$ in \eqref{eq:B-ineqTri} and each yields a family of new Bernstein inequalities
in $L^2$ norm on the simplex, of which some are sharp in the sense that equality holds for some 
polynomials. For the first two questions, we are able to establish new Bernstein inequalities for all 
derivatives in $L^p$ norm that extend \eqref{eq:B-ineqTri} to $\triangle^d$. They agree with those
in $L^2$ norm derived from the spectral operator when $d = 2$ but, surprisingly, not for $d > 2$. 
Moreover, these $L^p$ inequalities are established for all doubling weight on the simplex, not just 
for $W_\bk$. The proof follows the approach in \cite{X23} that relies on a framework based on highly
localized kernels developed in \cite{X21} for localizable space of homogeneous type. As a byproduct, 
our proof shows that $\triangle^d$ with $W_\bk$ and its intrinsic distance is such a space, which is
to be expected but not verified before.  

The paper is organized as follows. In the next section, we derive Bernstein inequalities in $L^2$ norm
from the spectral operator, which requires only a basic definition of orthogonality, and discuss their 
properties. The Bernstein inequalities in $L^p$ norm will be stated and discussed in the first 
subsection of Section 3 and proved in later subsections. 

Throughout this paper, we adopt the convention that the letter $c$ denotes a constant that depends
on fixed parameters, such as $d$, $p$ and $\bk$ but independent of the degree $n$ of polynomials.

\section{Spectral operator and $L^2$ Bernstein inequality}
\setcounter{equation}{0}

In this section, we prove the Bernstein inequality in the $L^2$ norm via the Jacobi weight function 
on the simplex. The triangle $\triangle^d$  is symmetric under permutations of $(x_1,\ldots, x_d, x_{d+1})$. 
We often write $x_{d+1} = 1- |x|$, so that the Jacobi weight function can be written as 
$$
W_{\bk}(x) = \prod_{i=1}^{d} x_i^{\k_i}(1-|x|)^{\k_{d+1}} =  \prod_{i=1}^{d+1} x_i^{\k_i}.
$$
The classical orthogonal polynomials on the simplex are orthogonal with respect to the inner product 
$$
  \la f, g \ra_{\bk} = b_\bk \int_{\triangle^d} f(x) g(x) W_\bk(x) \d x, \qquad 
            b_\k = \frac{\Gamma(|\bk|+d+1)}{\prod_{i=1}^{d+1} \Gamma(\k_i+1)},
$$
where $b_\bk$ is the normalized constant of $W_\bk$, so that $\la 1, 1\ra_\k = 1$. Let $\CV_n^d(W_{\bk})$ be
the space of orthogonal polynomials of degree $n$ with respect to $W_{\bk}$. It is well-known that 
$$
  \dim \CV_n^d(W_\bk) = \binom{n+d-1}{n}. 
$$
An orthogonal basis of $\CV_n^d(W_\bk)$ can be given in terms of the standard Jacobi polynomials $P_n^{(\a,\b)}$
\cite[Section 5.3]{DX}. Let $\proj_n^\bk f: L^2(W_\bk) \mapsto \CV_n^d(W_\bk)$ be the orthogonal projection operator. 
The Fourier orthogonal series of $f \in L^2(W_\bk)$ and its Parseval identity are given by 
$$
  f = \sum_{n=0}^\infty \proj_n^\bk f \quad \hbox{and} \quad \|f\|_{\bk,2} = \sum_{n=0}^\infty \left\| \proj_n^\bk f \right\|_{\bk,2},
\qquad \forall f\in  L^2(W_\bk),
$$
where $\|\cdot\|_{\bk,2} = \sqrt{\la f, f \ra_\bk}$ denotes the norm for $L^2(W_\bk)$. 

\subsection{Spectral operator and Bernstein inequality}
Orthogonal polynomials in the space $\CV_n^d(W_\bk)$ are eigenfunctions of  the spectral operator 
$\CD_\bk$ define in \eqref{eq:Dk} with an eigenvalue depending on $n$; more precisely
 \cite[Section 5.3]{DX}, 
$$
   \CD_\bk u = - n (n+|\bk| + d) u, \qquad u \in \CV_n^d(W_\bk).
$$
For convenience, we denote by $\partial_j$ the $j$-th partial derivative and we further define 
$$
   \partial_{i,j} := \partial_i - \partial_j, \qquad 1 \le i \ne j \le d.
$$
The operator $\CD_\bk$ can be written more symmetrically as 
\begin{equation} \label{eq:self-adj}
   \CD_\bk = \frac{1}{W_\bk} \left[\sum_{i=1}^d \partial_i \big( x_i (1-|x|) W_\bk(x) \partial_i \big) + 
     \sum_{1\le i<j\le d} \partial_{i,j} \big(x_i  x_j W_\bk(x) \partial_{i,j} \big)\right],
\end{equation}
where $\partial_{i,j} = \partial_i - \partial_j$. The weight function $W_\bk(x)$ is invariant under the simultaneous 
permutation of $(x_1,\ldots, x_d, 1-|x|)$ and $(\k_1,\k_2,\ldots,\k_{d+1})$. This symmetry is preserved in the 
decomposition of $\CD_\bk$ since the right-hand side of \eqref{eq:self-adj} is invariant under the simultaneous 
permutation, as can be easily verified. 

Using integration by parts, the decomposition in \eqref{eq:self-adj} leads immediately to
\begin{align} \label{eq:adj-int}
  - \int_{\triangle^d} \CD_\bk f(x) g(x) W_\bk(x) \d x 
     \, & = \sum_{i=1}^d \int_{\triangle^d}  x_i (1-|x|) \partial_i f(x) \partial_i g(x) W_\bk(x) \d x \\
     &  +  \sum_{1\le i<j\le d} \int_{\triangle^d} x_i x_j \partial_{i,j} f(x)  \partial_{i,j} g(x)  W_\bk(x) \d x, \notag
\end{align}
which implies, in particular, that $\CD_\bk$ is self-adjoint. As an application of the identity \eqref{eq:adj-int}, we
can obtain a sharp Bernstein inequality in the $L^2$ norm. Throughout this paper, we call an inequality sharp if 
equality holds for some polynomials. 

Let $\Pi_n^d$ be the space of polynomials of degree at most $n$ in $d$ variables. %Then $\dim \Pi_n^d =\binom{n+d}{n}$. 

\begin{thm} \label{thm:B-ineq0}
Let $d \ge 2$, $n = 0,1,2,\ldots$ and $f \in \Pi_n^d$. Then
\begin{equation}\label{eq:B-ineq0}
 \sum_{i=1}^d \left \| \sqrt{x_i (1-|x|)} \partial_i f \right \|_{\bk,2}^2 + 
  \sum_{1\le i<j\le d} \left \|\sqrt{ x_i x_j} \partial_{i,j} f \right \|_{\bk,2}^2 \le n(n+|\bk|+d) \|f \|_{\bk,2}^2
\end{equation}
and the equality holds if and only if $f \in \CV_n^d(W_\bk)$.  \footnote{We thank Andras Kro\'o for pointing out this
characterization for equality.} 
Furthermore, the following two inequalities are also sharp,
\begin{align}
 \sum_{i=1}^d \left \| \sqrt{x_i (1-|x|)} \partial_i f \right \|_{\bk,2}^2 \le n(n+|\bk|+d) \|f \|_{\bk,2}^2, \label{eq:B-ineq0a}
\end{align}
and, for $1 \le \ell \le d$,  
\begin{align}
\left \| \sqrt{x_\ell (1-|x|)} \partial_\ell f \right \|_{\bk,2}^2+ 
    \sum_{\substack{j=1 \\ j \ne \ell}}^d \left \|\sqrt{x_\ell x_j} \partial_{\ell,j} f \right \|_{\bk,2}^2 \le n(n+|\bk|+d) \|f \|_{\bk,2}^2. \label{eq:B-ineq0b}
\end{align}
\end{thm}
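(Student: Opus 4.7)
The plan is to recognize the left-hand side of \eqref{eq:B-ineq0} as the Dirichlet form attached to the self-adjoint operator $-\CD_\bk$, and then diagonalize using the orthogonal polynomial decomposition. Concretely, I would apply the identity \eqref{eq:adj-int} with $g=f$ and multiply through by the normalizing constant $b_\bk$ to obtain
$$
\sum_{i=1}^d \left\| \sqrt{x_i(1-|x|)}\,\partial_i f \right\|_{\bk,2}^2 + \sum_{1\le i<j\le d}\left\| \sqrt{x_ix_j}\,\partial_{i,j}f \right\|_{\bk,2}^2 = -\la \CD_\bk f, f\ra_\bk.
$$
This turns the desired inequality into an eigenvalue estimate for $-\CD_\bk$ restricted to $\Pi_n^d$.

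Next I would expand $f=\sum_{m=0}^n \proj_m^\bk f$, a finite sum since $f\in\Pi_n^d$. Because $\CD_\bk$ acts on $\CV_m^d(W_\bk)$ as multiplication by $-m(m+|\bk|+d)$ and the projections are mutually orthogonal in $L^2(W_\bk)$, Parseval's identity yields
$$
-\la \CD_\bk f, f\ra_\bk = \sum_{m=0}^n m(m+|\bk|+d)\,\|\proj_m^\bk f\|_{\bk,2}^2.
$$
The function $m\mapsto m(m+|\bk|+d)$ is strictly increasing on $\{0,1,2,\ldots\}$, since $|\bk|+d>-1$ by the hypothesis $\k_i>-1$. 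Bounding each coefficient by $n(n+|\bk|+d)$ and reassembling via Parseval gives \eqref{eq:B-ineq0}. The strict monotonicity also shows that equality forces $\proj_m^\bk f=0$ for every $m<n$, hence $f=\proj_n^\bk f\in\CV_n^d(W_\bk)$; conversely, any such $f$ obviously saturates the bound.

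The two refinements \eqref{eq:B-ineq0a} and \eqref{eq:B-ineq0b} then follow from \eqref{eq:B-ineq0} by discarding nonnegative summands, so the only substance left is their sharpness. For \eqref{eq:B-ineq0a} I would exhibit an element of $\CV_n^d(W_\bk)$ depending only on $|x|$, so that every $\partial_{i,j}f$ vanishes; the Jacobi polynomial in $1-2|x|$ with parameters tied to $\k_{d+1}$ and $|\bk|-\k_{d+1}+d-1$ is a standard orthogonal polynomial on $\triangle^d$ that does the job, and plugging it into \eqref{eq:B-ineq0} forces equality in \eqref{eq:B-ineq0a}. For \eqref{eq:B-ineq0b} I would exhibit $f\in\CV_n^d(W_\bk)$ depending only on $x_\ell$; this kills every $\partial_i f$ with $i\ne\ell$ as well as every $\partial_{i,j}f$ with $i,j\ne\ell$, so the only surviving terms in \eqref{eq:B-ineq0} are exactly those appearing on the left of \eqref{eq:B-ineq0b}. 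Such polynomials are obtained as the leading elements of the standard Appell-type orthogonal basis on $\triangle^d$ produced by successively integrating out the variables $x_j$ for $j\ne\ell$.

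The main obstacle I anticipate is the sharpness clause for \eqref{eq:B-ineq0a} and \eqref{eq:B-ineq0b}: both inequalities themselves are one-line consequences of \eqref{eq:B-ineq0}, but producing polynomials that saturate them requires pinpointing specific elements of $\CV_n^d(W_\bk)$ with the prescribed dependence structure and tracking their Jacobi parameters carefully. The proof of \eqref{eq:B-ineq0} and of its equality characterization, by contrast, is immediate once the Dirichlet form is diagonalized and the strict monotonicity of the eigenvalues is noted.
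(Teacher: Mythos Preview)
Your proposal is correct and follows essentially the same route as the paper: identify the left side of \eqref{eq:B-ineq0} with $-\la \CD_\bk f,f\ra_\bk$ via \eqref{eq:adj-int}, diagonalize using the orthogonal decomposition $f=\sum_m \proj_m^\bk f$, and exhibit the same Jacobi-polynomial extremizers (depending only on $|x|$ for \eqref{eq:B-ineq0a}, only on $x_\ell$ for \eqref{eq:B-ineq0b}). The one minor difference is that the paper bounds $\|\CD_\bk f\|_{\bk,2}\le \lambda_n^\bk\|f\|_{\bk,2}$ and then applies Cauchy--Schwarz to $\la \CD_\bk f,f\ra_\bk$, whereas you compute $-\la \CD_\bk f,f\ra_\bk=\sum_m \lambda_m^\bk\|\proj_m^\bk f\|_{\bk,2}^2$ directly; your version is slightly more streamlined and makes the ``only if'' direction of the equality characterization immediate from the strict monotonicity of $m\mapsto \lambda_m^\bk$.
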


\begin{proof}
Let $\l_n^\bk = n(n+|\bk|+d)$. Since $f$ is a polynomial of degree $n$, we have
$$
  f = \sum_{j=0}^n \proj_j^\bk f  \quad \hbox{and} \quad  \CD_\bk f = \sum_{j=0}^n \l_j^\bk \proj_j^{\bk} f.
$$
In particular, by the orthogonality and the Parseval identity, we obtain
$$
  \left\|\CD_\bk f(x) \right\|_{\bk,2}^2 = \sum_{j=0}^n (\l_j^\bk)^2  \left\|\proj_j^{\bk} f \right\|_{\bk,2}^2
    \le  (\l_n^{\bk} )^2 \sum_{j=0}^n  \left\|\proj_j^{\bk} f \right\|_{\bk,2}^2 =( \l_n^{\bk})^2 \|f\|_{\bk,2}^2.
$$
Consequently, by the Cauchy-Schwarz inequality, we deduce 
\begin{equation} \label{eq:int_Dfg}
   \left|c_\bk \int_{\triangle^d} \CD_\bk f(x)\cdot g(x) W_\bk(x) \d x \right| \le 
     \left\|\CD_\bk f(x) \right\|_{\bk,2} \cdot \|f\|_{\bk,2} \le \l_n^{\bk} \|f\|_{\bk,2}^2.
\end{equation}
Setting $g = f$ in \eqref{eq:adj-int} and applying the above inequality, we have proved \eqref{eq:B-ineq0}, whereas
\eqref{eq:B-ineq0a} and \eqref{eq:B-ineq0b} are immediate consequences of \eqref{eq:B-ineq0}. To see that \eqref{eq:B-ineq0} 
is sharp, we notice that it becomes equality for any orthogonal polynomial in $\CV_n^d(W_\bk)$ by the spectral
property $\CD_\bk f  = \l_n^\bk f$ and, moreover, the above proof also shows that the reversed inequality \eqref{eq:B-ineq0}
holds if $f\in \CV_n^d(W_\bk)$. Furthermore, we consider the Jacobi polynomial 
\begin{equation} \label{eq:Pe1}
  P_{e_1}^\bk(x) = P_n^{(|\bk|-\k_1 + d-1,\k_1)}(2 x_1-1),
\end{equation}
which is the orthogonal polynomial of degree $n$ in $\CV_n^d(W_{\bk})$ by setting $\a_1 = n$ and $\a_2 \ldots = \a_{d} = 0$  
for the orthogonal polynomial $P_{\a}$ given in \cite[Prop. 5.3.1]{DX}. Choosing $f = g =  P_{e_1}^\bk(x)$ in \eqref{eq:adj-int}
shows that \eqref{eq:B-ineq0b} is sharp for $\ell = 1$. The case of $\ell \ne 1$ follows from the simultaneous permutation 
of variables and parameters. Furthermore, making a change of variables $(x_1,\ldots, x_d) \mapsto 
(1-|x|, x_2,\ldots,x_{d-1})$ and simultaneously for $\bk$, we see that the polynomial 
\begin{equation} \label{eq:Re1}
  R_{e_1}^\bk(x) = P_{e_1}^\bk(1-|x|, x_2,\ldots, x_d) = P_n^{(|\bk|-\k_{d+1}+ d-1,\k_{d+1})}(1- 2 |x|)
\end{equation}
is also an element in $\CV_n^d(W_\bk)$. Setting $f = g = R_n$ in \eqref{eq:adj-int} shows that equality holds in \eqref{eq:B-ineq0b}. 
Moreover, we evidently have $\partial_{i,j} R_n(x) = 0$. Hence, the identity for \eqref{eq:B-ineq0a} holds as well for $R_{e_1}^\bk$. 
\end{proof}

As an immediate consequence of the theorem, we obtain the Bernstein inequalities 
\begin{align}
 \left \| \sqrt{x_i (1-|x|)} \partial_i f \right \|_{\bk,2} & \le  \sqrt{n(n+|\bk|+d)} \|f \|_{\bk,2}, \quad 1 \le i \le d, \label{eq:B-ineq0c}\\
  \left \| \sqrt{x_i x_j} \partial_{i,j} f \right \|_{\bk,2} & \le \sqrt{n(n+|\bk|+d)} \|f \|_{\bk,2}, \quad 1 \le i, j \le d, \label{eq:B-ineq0d}
\end{align}
for any polynomial $f$ of degree at most $n$. These inequalities are classical and, apart from the explicit constant 
on the right-hand side, hold for the weighted $L^p$ norm (\cite{BX, Dit, DT}). 

\subsection{New decomposition of spectral operator and Bernstein inequality}\label{sect:2.2}
In this subsection, we present another type of decomposition of the spectral operator $\CD_\bk$, which is characteristically 
different from \eqref{eq:self-adj} and is somewhat surprising. It leads to a number of new Bernstein inequalities, including 
those on the triangle mentioned in the introduction. 

\begin{thm} \label{thm:CD_k2}
For $d \ge 2$, the spectral operator $\CD_\bk$ on $\triangle^d$ satisfies 
\begin{align} \label{eq:CD_k2}
  \CD_\bk = \frac{1}{W_\bk(x)}&  \left[\frac{1}{|x|^d} \la x ,\nabla\ra \left(|x|^{d-1}(1-|x|) W_\bk(x)  \la x ,\nabla\ra \right) \right. \\
                  &  \qquad \qquad\quad  \left. +    \frac{1}{|x|} \sum_{1 \le i < j \le d} \partial_{i,j}\left(  x_i x_j W_\bk(x) \partial_{i,j}\right) \right]. \notag
\end{align}
\end{thm}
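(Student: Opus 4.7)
The plan is to reduce to the already-established decomposition \eqref{eq:self-adj}. Set
$$A := \sum_{i=1}^d \partial_i\bigl(x_i(1-|x|) W_\bk \partial_i\bigr), \qquad B := \sum_{1\le i<j\le d} \partial_{i,j}\bigl(x_i x_j W_\bk \partial_{i,j}\bigr),$$
so that $W_\bk \CD_\bk = A + B$ by \eqref{eq:self-adj}. Letting $A' := \frac{1}{|x|^d}\la x,\nabla\ra\bigl(|x|^{d-1}(1-|x|) W_\bk \la x,\nabla\ra\bigr)$ denote the first block on the right of \eqref{eq:CD_k2} and observing that the second block there is $B/|x|$, the claim reduces to the operator identity
$$A - A' \,=\, \frac{1-|x|}{|x|}\, B$$
(valid on $\triangle^d\setminus\{0\}$, which suffices by continuity), which I would prove by direct Leibniz expansion of both sides.

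Writing $U := (1-|x|) W_\bk$, the Leibniz rule gives
$$A f = \sum_i U\,\partial_i f + \sum_i x_i(\partial_i U)\,\partial_i f + \sum_i x_i U\, \partial_i^2 f,$$
while the elementary identities $\la x,\nabla\ra |x|^{d-1} = (d-1)|x|^{d-1}$ and $\la x,\nabla\ra\la x,\nabla f\ra = \la x,\nabla f\ra + \sum_{i,j} x_i x_j \partial_i \partial_j f$ expand $A'$ to
$$A' f = \frac{1}{|x|}\Bigl[d\, U\, \la x,\nabla f\ra + \la x,\nabla U\ra\,\la x,\nabla f\ra + U \sum_{i,j} x_i x_j \partial_i \partial_j f\Bigr].$$

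I would then organize $A f - A' f$ using three symmetrization identities, each obtained by splitting a double sum into $i<j$ and $i>j$ pieces and relabeling: the second-order identity $\sum_{i,j} x_i x_j(\partial_i^2-\partial_i\partial_j)f = \sum_{i<j} x_i x_j \partial_{i,j}^2 f$ combines the pure second-order contributions; the first-order identity $|x|\sum_i \partial_i f - d\la x,\nabla f\ra = \sum_{i<j}(x_j-x_i)\partial_{i,j} f$ handles the pure first-order part; and $|x|\sum_i x_i(\partial_i U)\partial_i f - \la x,\nabla U\ra \la x,\nabla f\ra = \sum_{i<j} x_i x_j(\partial_{i,j} U)\partial_{i,j} f$ handles the weight-derivative first-order part. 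Since $\partial_{i,j}(1-|x|)=0$, we have $\partial_{i,j} U = (1-|x|)\partial_{i,j} W_\bk$, and $\partial_{i,j} W_\bk / W_\bk = \k_i/x_i - \k_j/x_j$ gives $x_i x_j \partial_{i,j} U / U = \k_i x_j - \k_j x_i$. Hence the coefficient of $\partial_{i,j} f$ in $A f - A' f$ simplifies to $\frac{U}{|x|}\bigl[(1+\k_i) x_j - (1+\k_j) x_i\bigr]$, and the coefficient of $\partial_{i,j}^2 f$ is $\frac{U}{|x|} x_i x_j$.

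On the other side, the same Leibniz calculation applied to $B$, together with $\partial_{i,j}(x_i x_j W_\bk) = W_\bk[(1+\k_i)x_j - (1+\k_j)x_i]$ (the $(1-|x|)$-derivatives cancel), gives
$$B f = \sum_{i<j} W_\bk\bigl[(1+\k_i)x_j-(1+\k_j)x_i\bigr]\partial_{i,j} f + \sum_{i<j} x_i x_j W_\bk\, \partial_{i,j}^2 f,$$
so $\frac{1-|x|}{|x|} B f$ matches the expression computed for $A f - A' f$ term by term. The main obstacle is the combinatorial bookkeeping in the three symmetrization identities — tracking the cancellations under relabeling that convert mixed double sums into sums over $i<j$ involving $\partial_{i,j}$ — but once these are in hand the rest is routine differentiation.
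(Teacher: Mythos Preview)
Your argument is correct. Both your proof and the paper's reduce to the known self-adjoint form \eqref{eq:self-adj} and use Leibniz expansions together with the symmetrization $\sum_{i\ne j}(\cdot) = \sum_{i<j}(\cdot + \text{relabeled})$ to pass between double sums and sums over $i<j$ in $\partial_{i,j}$. The organization differs: the paper works \emph{forward}, starting from $L=\la x,\nabla\ra\,W_{\bk+e}\,\la x,\nabla\ra$, splitting it as $L_1+L_2$, rewriting $L_2$ to extract the $\partial_{i,j}$ block plus a correction $-(d-1)W_{\bk+e}\la x,\nabla\ra$, and then recognizing $|x|\,W_\bk\CD_\bk$ via \eqref{eq:self-adj} before absorbing the $(d-1)$-term with the identity $\la x,\nabla\ra(|x|^{d-1}G)=|x|^{d-1}[\la x,\nabla\ra G+(d-1)G]$. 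You instead work \emph{by difference}, reducing immediately to the single operator identity $A-A'=\tfrac{1-|x|}{|x|}\,B$ and matching the coefficients of $\partial_{i,j}f$ and $\partial_{i,j}^2 f$ on each side. Your route is a bit more mechanical but very transparent; the paper's route explains where the factor $|x|^{d-1}$ comes from without guessing it in advance. Either way the core computation is the same.
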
 

\begin{proof}
Let $e = (0,\ldots,0,1) \in \RR^{d+1}$. We start with the identity 
\begin{align*}
 L : = & \la x ,\nabla\ra  W_{\bk+e}  \la x ,\nabla\ra = \sum_{i=1}^d x_i \partial_i \sum_{j=1}^d W_{\bk+e}(x) x_j \partial_j \\
  = \, & |x| \sum_{j=1}^d \partial_j (W_{\bk+e}(x) x_j)\partial_j 
        -  \sum_{i=1}^d x_i \sum_{j=1}^d \left[ \partial_j\big(W_{\bk+e} (x)x_j\partial_j  \big)
          - \partial_i \big(W_{\bk+e}(x)x_j  \partial_j\big) \right]\\
   =\, &  |x| \sum_{i=1}^d \partial_i \big(W_{\bk+e}(x)x_i \partial_i\big) +  
            \sum_{i=1}^d x_i \sum_{j=1}^d (\partial_i-\partial_j) \big(W_{\bk+e}(x)  x_j\partial_j\big) : = L_1 + L_2.
\end{align*}
Using $x_k \partial_k f(x) = \partial_k x_k f(x) - f(x)$, the second term $L_2$ on the right-hand side can be written as 
\begin{align*}
  L_2 \, & = 
  \sum_{j=1}^d \Bigg[ \sum_{\substack{i=1\\i\ne j}}^d (\partial_i-\partial_j) \big(x_i x_j W_{\bk+e} (x)\partial_j\big) - (d-1) W_{\bk+e} (x) x_j \partial_j \Bigg]\\
 & =  - \sum_{1 \le i < j \le d}\partial_{i,j}  \big( x_i x_j W_{\bk+e}(x) \big) \partial_{i,j} 
    - (d-1) |x| W_{\bk+e}(x) \la x, \nabla \ra,
\end{align*}
where the second step, using $\partial_{i,j} = \partial_i - \partial_j$, follows from the identity 
$$
\sum_{j=1}^d  \sum_{\substack{i=1\\i\ne j}}^d A_{i,j} = \sum_{1\le i<j\le d} (A_{i,j} + A_{j,i}). 
$$
Hence, using $W_{\bk+e}(x) = (1-|x|)W_\k(x)$ and $\partial_{i,j} \phi(|x|) = 0$, it follows that 
\begin{align*}
L= \la x ,\nabla\ra W_{\bk+e}(x)  \la x ,\nabla\ra \,  = & |x| \sum_{i=1}^d \partial_i \big(x_i W_{\bk+e} (x)\big)\partial_i 
        - (d-1) |x| W_{\bk+e}(x) \la x, \nabla \ra \\ 
      & - (1-|x|) \sum_{1 \le i < j \le d}\partial_{i,j}  \big( x_i x_j W_\bk(x) \big) \partial_{i,j}.
\end{align*}
Rearranging the terms, the above identity yields 
\begin{align*}
  |x| \CD_{\bk}\, & =  |x|   \sum_{i=1}^d \partial_i \big(x_i W_{\bk+e} (x)\big)\partial_i 
    + |x|\sum_{1 \le i < j \le d}\partial_{i,j}  \big( x_i x_j W_\bk(x) \big) \partial_{i,j}\\ 
  & =  \la x ,\nabla\ra  \big(W_{\bk+e}(x)  \la x ,\nabla\ra\big) + (d-1) W_{\bk+ e}(x) \la x, \nabla \ra 
    + \sum_{1 \le i < j \le d}\partial_{i,j}  \big( x_i x_j W_\bk(x) \big) \partial_{i,j} \\
   & =  \frac{1}{|x|^{d-1}} \la x ,\nabla\ra \big( |x|^{d-1} W_{\bk+e}(x)  \la x ,\nabla\ra\big)
   + \sum_{1 \le i < j \le d}\partial_{i,j}  \big( x_i x_j W_\bk(x) \big) \partial_{i,j}, 
\end{align*}
where the second equality follows from applying the identity
$$
  \la x ,\nabla\ra \big( |x|^{d-1} G(x) \big) = |x|^{d-1} \big[ \la x ,\nabla\ra G(x) + (d-1) G(x)\big] 
$$
with $G(x) = W_{\bk+ e}(x) \la x, \nabla \ra$. This completes the proof. 
\end{proof}

It is worth mentioning that while the right-hand side of \eqref{eq:self-adj} is invariant under the simultaneous permutation of 
$\bk$ and $(x, 1-|x|)$, the right-hand side of \eqref{eq:CD_k2} is not. Indeed, we have the following: 

\begin{cor} \label{cor:CD_k3}
Let $\ell$ be an integer $1 \le \ell \le d$. Then the spectral operator $\CD_\bk$ satisfies 
\begin{align*} %\label{eq:CD_k3}
  \CD_\bk = & \frac{1}{W_\bk(x)}  \left[\frac{1}{(1-x_\ell)^d} (\la x ,\nabla\ra - \partial_\ell) 
      \left (x_\ell (1-x_\ell)^{d-1}W_\bk(x) (\la x ,\nabla\ra - \partial_\ell)\right) \right. \\
                  &  + \frac{1}{1-x_\ell} 
                   \sum_{\substack{1 \le i \le d \\ i \ne \ell}} \partial_i \big(  x_i (1-|x|) W_\bk(x) \partial_i \big)  
                   + \frac{1}{1-x_\ell} 
                   \sum_{\substack{1 \le i < j \le d \\ i \ne \ell, j \ne \ell}} 
                      \partial_{i,j}\left(  x_i x_j W_\bk(x) \partial_{i,j}\right) \bigg]. \notag
\end{align*}
\end{cor}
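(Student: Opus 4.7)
The plan is to derive the corollary from Theorem 2.2 by exploiting the permutation symmetry of the simplex. Let $T: \triangle^d \to \triangle^d$ be the involution defined by $(Tx)_i = x_i$ for $i \ne \ell$ and $(Tx)_\ell = 1 - |x|$; geometrically, $T$ swaps the barycentric coordinates $x_\ell$ and $x_{d+1} = 1-|x|$. Writing $\tilde\bk$ for the parameter vector with $\k_\ell$ and $\k_{d+1}$ interchanged, one has $W_\bk \circ T = W_{\tilde\bk}$, $|\det DT| = 1$, and $|\tilde\bk| = |\bk|$. A change of variables in the orthogonality relations shows that $T^*: \CV_n^d(W_{\tilde\bk}) \to \CV_n^d(W_\bk)$, $g \mapsto g\circ T$, is a degree-preserving isomorphism, and since $\CD_\bk$ has eigenvalue $-n(n+|\bk|+d)$ on $\CV_n^d(W_\bk)$ depending only on $n$ and $|\bk|$, this yields the intertwining $\CD_\bk(g\circ T) = (\CD_{\tilde\bk} g)\circ T$.

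Next, I would apply Theorem 2.2 to $\CD_{\tilde\bk}$ in the $y$-variables and substitute $y = T(x)$. The scalar factors transform as $|y| = 1 - x_\ell$, $1 - |y| = x_\ell$, and $W_{\tilde\bk}(y) = W_\bk(x)$, giving the correct prefactors $(1-x_\ell)^{-d}$, $x_\ell(1-x_\ell)^{d-1}$, and $(1-x_\ell)^{-1}$ in the corollary. For the derivative operators, the chain rule gives, for $f = g \circ T$, that $\partial_\ell f = -(\partial^y_\ell g)\circ T$ and $\partial_i f = ((\partial^y_i - \partial^y_\ell)g)\circ T$ for $i\ne\ell$. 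Inverting, $\la y, \nabla_y\ra$ pulls back to $\la x, \nabla\ra - \partial_\ell$, converting the first main term of Theorem 2.2 to the corresponding term in the corollary.

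The most delicate step is the pullback of the double-derivative sum $\sum_{i<j}\partial^y_{i,j}(y_iy_j W_{\tilde\bk}\partial^y_{i,j})$, which I would split by whether $\ell \in \{i,j\}$. For $i, j\ne\ell$, $\partial^y_{i,j}$ pulls back to $\partial_{i,j}$ and $y_iy_j = x_ix_j$, reproducing the double-derivative sum in the corollary. For the terms involving $\ell$, $\partial^y_{\ell,j}$ pulls back to $-\partial_j$ and $\partial^y_{i,\ell}$ to $\partial_i$, while $y_\ell y_j = (1-|x|)x_j$ and $y_iy_\ell = x_i(1-|x|)$; the sign products yield $+1$ in both subcases, and combining the two sub-sums over $j > \ell$ and $i < \ell$ produces the single sum $\sum_{j\ne\ell}\partial_j(x_j(1-|x|)W_\bk\partial_j)$ of the corollary. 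The main obstacle is precisely this bookkeeping, verifying that signs cancel correctly and that the combined range of summation is all $j \ne \ell$. A more direct alternative would mimic the proof of Theorem 2.2 with $\la x,\nabla\ra - \partial_\ell$ and $W_{\bk+e_\ell}$ in place of $\la x,\nabla\ra$ and $W_{\bk+e_{d+1}}$, using the eigenfunction identity $(\la x,\nabla\ra - \partial_\ell)(1-x_\ell) = 1-x_\ell$ in place of the homogeneity of $|x|$.
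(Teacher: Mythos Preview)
Your proposal is correct and follows essentially the same route as the paper: both deduce the corollary from Theorem~\ref{thm:CD_k2} by a permutation of barycentric coordinates, with the paper using a cyclic shift (and reducing to $\ell=1$) where you use the transposition swapping $x_\ell$ and $x_{d+1}$ directly for arbitrary $\ell$. Your explicit intertwining argument for the left-hand side is a nice touch the paper leaves implicit, but the core computation---pulling back $\la y,\nabla_y\ra$ to $\la x,\nabla\ra-\partial_\ell$ and splitting the $\partial_{i,j}$ sum by whether $\ell\in\{i,j\}$---is the same bookkeeping the paper sketches.
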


\begin{proof}
It suffices to prove the case when $\ell = 1$. We make a simultaneous change of variables 
$(x_1,\ldots, x_d) = (y_2, y_3,\ldots, y_d, 1-|y|)$ and $\bk \mapsto (\k_2, \ldots, \k_{d+1}, \k_1)$. Then 
$W_\bk(x)$ is unchanged and $\partial_{x_j} = \partial_{y_{j+1}} - \partial_{y_1}$ for $1 \le j \le d-1$ and 
$\partial_{x_d} = - \partial_{y_1}$. Hence, it follows readily that $\la x, \nabla_x\ra = \la y,\nabla_y\ra - \partial_{y_1}$; 
moreover, $\partial_{x_i} - \partial_{x_j} = \partial_{y_{i+1}}-\partial_{y_{j+1}}$ for $1 \le i <  j \le d-1$ and 
$\partial_{x_i} - \partial_{x_d} = \partial_{y_{i+1}}$ for $1 \le i \le d-1$. It follows that the right-hand side 
of \eqref{eq:CD_k2} becomes the stated new decomposition in $y$ variables for $\ell = 1$. 
\end{proof} 

As an application of the new decomposition of $\CD_\bk$, we obtain alternative expressions for the
integral in \eqref{eq:adj-int}. 

\begin{thm} 
Let $f$ and $g$ be functions in $C^2(\triangle^d)$. Then 
\begin{align} \label{eq:self-adj2}
  -\int_{\triangle^d} \CD_\bk f(x) \cdot g(x) W_\bk(x) \d x 
     \,= &  \int_{\triangle^d} \la x ,\nabla \ra f(x) \cdot \la x ,\nabla \ra g(x) (1-|x|)W_\bk(x) \frac{\d x}{|x|} \\
      &  +  \sum_{1\le i<j\le d} \int_{\triangle^d} \partial_{i,j} f(x)  \partial_{i,j} g(x)  x_i x_j W_\bk(x) \frac{\d x}{|x|}. \notag
\end{align}
Moreover, for $1 \le \ell \le d$, 
\begin{align} \label{eq:self-adj3}
  -\int_{\triangle^d}  \CD_\bk f(x)& \cdot g(x)  W_\bk(x)  \d x = 
      \sum_{\substack{i=1 \\ i \ne \ell}}^d \int_{\triangle^d} \partial_i f(x) \cdot  \partial_i g(x)  x_i (1-|x|) \frac{\d x}{1-x_\ell} \\ 
       + & \sum_{\substack{1 \le i < j \le d \\ i \ne \ell, \, j \ne \ell}}  
          \int_{\triangle^d} \partial_{i,j} f(x)  \partial_{i,j} g(x)  x_i x_j W_\bk(x) \frac{\d x}{1-x_\ell} \notag \\
       + & \int_{\triangle^d} (\la x ,\nabla \ra- \partial_\ell) f(x) \cdot (\la x ,\nabla \ra g(x)- \partial_\ell) 
         g(x) x_\ell W_\bk(x) \frac{\d x}{1-x_\ell} \notag.
\end{align}
\end{thm}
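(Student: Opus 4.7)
The plan is to mimic the derivation of \eqref{eq:adj-int} from \eqref{eq:self-adj}: multiply the new decomposition of $\CD_\bk$ in Theorem~\ref{thm:CD_k2} by $g(x)W_\bk(x)$, integrate over $\triangle^d$, and integrate by parts once per summand. Identity \eqref{eq:self-adj3} will be proved analogously, starting instead from the decomposition in Corollary~\ref{cor:CD_k3}.

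First I would treat the Euler-operator summand of \eqref{eq:CD_k2}. The key algebraic fact, obtained from $\partial_i|x|=1$, is the ``divergence identity''
$$
\sum_{i=1}^d \partial_i\!\left(\frac{x_i\,\phi(x)}{|x|^d}\right) = \frac{\la x,\nabla\ra\phi(x)}{|x|^d},
$$
which says precisely that the Euler operator $\la x,\nabla\ra$ is formally skew-symmetric with respect to the measure $\d x/|x|^d$. Applied with $\phi=g$ and $F=|x|^{d-1}(1-|x|)W_\bk\la x,\nabla\ra f$, it gives
$$
\int_{\triangle^d}\frac{g(x)}{|x|^d}\,\la x,\nabla\ra F(x)\,\d x \;=\; -\int_{\triangle^d}\la x,\nabla\ra f\cdot\la x,\nabla\ra g\,(1-|x|)W_\bk(x)\,\frac{\d x}{|x|},
$$
which is, up to the overall sign, the first integral on the right-hand side of \eqref{eq:self-adj2}. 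For the second summand I would exploit $\partial_{i,j}|x|=0$: this makes $1/|x|$ commute with each $\partial_{i,j}$, and one standard integration by parts per pair $(i,j)$ produces the corresponding double-sum integral in \eqref{eq:self-adj2}.

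For \eqref{eq:self-adj3} the same scheme applies to Corollary~\ref{cor:CD_k3}. The modified vector field $V_\ell := \la x,\nabla\ra - \partial_\ell$ has constant divergence $d$ and satisfies $V_\ell\bigl((1-x_\ell)^{-d}\bigr) = -d(1-x_\ell)^{-d}$, which by the same divergence-theorem argument makes it formally skew-symmetric with respect to $\d x/(1-x_\ell)^d$. Meanwhile $\partial_i$ (for $i\ne\ell$) and $\partial_{i,j}$ (for $i,j\ne\ell$) annihilate $1-x_\ell$, so $1/(1-x_\ell)$ commutes past them, and ordinary integration by parts handles the remaining two sums; summing the three contributions yields \eqref{eq:self-adj3}.

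The main obstacle I anticipate is the justification of vanishing boundary terms. On each face $\{x_i=0\}$ the factor $x_iW_\bk$ carries $x_i^{\k_i+1}$, and on $\{|x|=1\}$ the factor $(1-|x|)W_\bk$ carries $(1-|x|)^{\k_{d+1}+1}$; since $\k_i,\k_{d+1}>-1$ and $f,g\in C^2(\triangle^d)$, all surface integrals vanish. The apparent singularity of $|x|^{-d}$ at the vertex $x=0$ is spurious, since $\la x,\nabla\ra f=O(|x|)$ for smooth $f$ and $F=O(|x|^{d-1})$, which keeps the final integrand absolutely integrable; a clean justification is obtained by truncating to $\{|x|\ge\ve\}$ and letting $\ve\to 0$. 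The $(1-x_\ell)^{-d}$ singularity in the argument for \eqref{eq:self-adj3} is treated identically.
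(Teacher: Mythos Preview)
Your proposal is correct and follows essentially the same route as the paper: the paper also integrates by parts in the decomposition \eqref{eq:CD_k2}, using exactly your divergence identity $\sum_i \partial_i\!\bigl(x_i\,g/|x|^d\bigr)=|x|^{-d}\la x,\nabla\ra g$ for the Euler-operator term and $\partial_{i,j}\phi(|x|)=0$ for the $\partial_{i,j}$-terms, and then treats \eqref{eq:self-adj3} ``likewise'' from Corollary~\ref{cor:CD_k3}. Your discussion of boundary terms and of the apparent singularities at $|x|=0$ and $x_\ell=1$ is more explicit than the paper's brief proof, but the underlying argument is the same.
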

 
\begin{proof}
We prove \eqref{eq:self-adj2} via integration by parts of the expression of $\CD_\bk$ in \eqref{eq:CD_k2}. 
For the first term in the right-hand side of \eqref{eq:CD_k2}, we use the following identity 
$$
  \sum_{i=1}^d \partial_i \left[ \frac{x_i}{|x|^d} g (x)\right] = \sum_{i=1}^d \left( \frac{1}{|x|^d} - \frac{x_i}{|x|^{d+1}} \right) g(x)
     +\frac{1}{|x|^d}\la x,\nabla \ra g(x) = \frac{1}{|x|^d}\la x,\nabla \ra g(x),
$$ 
which implies immediately that the integral by parts gives
\begin{align*}
 - \int_{\triangle^d} & \frac{1}{|x|^d} \la x ,\nabla\ra \big(|x|^{d-1}W_{\bk+e}(x)  \la x ,\nabla\ra f(x)\big) \cdot g (x) \d x \\
 & = \sum_{i=1}^d  \int_{\triangle^d} |x|^{d-1}W_{\bk+e}(x)  \la x ,\nabla\ra f(x) \partial_i \left[ \frac{x_i}{|x|^d} g (x)\right] \d x \\
 & = \int_{\triangle^d} W_{\bk+e}(x)  \la x ,\nabla\ra f(x)  \la x, \nabla \ra g (x) \frac{\d x}{|x|}. 
\end{align*}
For the second term in the right-hand side of \eqref{eq:CD_k2}, the factor $\f 1 {|x|}$ can be put inside the derivatives,  
using $\partial_{i,j} \phi(|x|) = 0$, so that the integration by parts can be carried out straightforwardly.
The proof of \eqref{eq:self-adj2} follows likewise via integration by parts of the expression of $\CD_\bk$ in the
Corollary \ref{cor:CD_k3}. 
\end{proof} 
 
As an immediate application of the new integral expressions in the above theorem, we can state the
following $L^2$ Bernstein inequality. 
 
\begin{cor} 
Let $d \ge 2$, $n = 0,1,2,\ldots$ and $f \in \Pi_n^d$. Then
\begin{align} \label{eq:B-ineq1}
     \left \|\frac{\sqrt{1-|x|}}{\sqrt{|x|}}  \la x ,\nabla \ra f\right \|_{\bk,2}^2
       +  \sum_{1\le i<j\le d} \left \| \frac{\sqrt{x_i x_j}}{\sqrt{|x|}} \partial_{i,j} f \right\|_{\bk,2}^2 \le 
            n(n+|\bk|+d) \|f\|_{\bk,2}^2,
\end{align}
and the equality holds if and only if $f \in \CV_n^d(W_\bk)$. Furthermore, the following inequality is sharp, 
\begin{align} \label{eq:B-ineq1a}
\left \|\frac{\sqrt{1-|x|}}{\sqrt{|x|}}  \la x ,\nabla \ra f\right \|_{\bk,2} \le \sqrt{n(n+|\bk|+d)}  \|f\|_{\bk,2}. 
\end{align}
\end{cor}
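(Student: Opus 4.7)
The plan is to follow the template of \thmref{thm:B-ineq0} verbatim, substituting the new self-adjoint integration formula \eqref{eq:self-adj2} for the old one \eqref{eq:adj-int}. Setting $g=f$ in \eqref{eq:self-adj2} identifies its right-hand side with the left-hand side of \eqref{eq:B-ineq1}, so the task reduces to bounding $-b_\bk\int_{\triangle^d} \CD_\bk f\cdot f\, W_\bk\,\d x$ by $\lambda_n^\bk\|f\|_{\bk,2}^2$, where $\lambda_n^\bk = n(n+|\bk|+d)$. This is precisely the step already executed in \eqref{eq:int_Dfg}, and it requires no modification: expanding $f = \sum_{j=0}^n \proj_j^\bk f$, then invoking Parseval, the monotonicity of $j\mapsto \lambda_j^\bk$, and Cauchy--Schwarz deliver the bound immediately.

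For the equality characterization in \eqref{eq:B-ineq1}, I would track when both of the inequalities in the chain are saturated. Writing
\begin{equation*}
  -b_\bk \int_{\triangle^d} \CD_\bk f\cdot f\, W_\bk\,\d x = \sum_{j=0}^n \lambda_j^\bk \bigl\|\proj_j^\bk f\bigr\|_{\bk,2}^2 \le \lambda_n^\bk \|f\|_{\bk,2}^2,
\end{equation*}
the estimate is tight exactly when $\proj_j^\bk f = 0$ for all $j<n$, i.e., $f \in \CV_n^d(W_\bk)$. Conversely, if $f \in \CV_n^d(W_\bk)$, then $\CD_\bk f = -\lambda_n^\bk f$, so Cauchy--Schwarz is attained ($\CD_\bk f$ is a scalar multiple of $f$) and the eigenvalue estimate becomes an identity; inserting this into \eqref{eq:self-adj2} gives equality in \eqref{eq:B-ineq1}.

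To establish sharpness of \eqref{eq:B-ineq1a}, I would test it on the polynomial $R_{e_1}^\bk$ from \eqref{eq:Re1}, which lies in $\CV_n^d(W_\bk)$ and depends only on $|x|$. The identity $\partial_{i,j}\varphi(|x|)=0$ then forces every $\partial_{i,j}R_{e_1}^\bk$ to vanish, so the second sum in \eqref{eq:B-ineq1} drops out, and the already-established equality in \eqref{eq:B-ineq1} for $R_{e_1}^\bk$ collapses to equality in \eqref{eq:B-ineq1a}. No step looks like a genuine obstacle here: the heavy lifting has been packaged into \eqref{eq:self-adj2}, and the rest is a direct import from the proof of \thmref{thm:B-ineq0}.
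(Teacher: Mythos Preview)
Your proposal is correct and follows essentially the same approach as the paper: set $g=f$ in \eqref{eq:self-adj2}, bound $-\la \CD_\bk f,f\ra_\bk$ by $\lambda_n^\bk\|f\|_{\bk,2}^2$ via the orthogonal expansion, and test sharpness on $R_{e_1}^\bk$. Your handling of the ``if and only if'' direction is in fact slightly more direct than the paper's, since you use the identity $-\la \CD_\bk f,f\ra_\bk=\sum_{j}\lambda_j^\bk\|\proj_j^\bk f\|_{\bk,2}^2$ rather than routing through Cauchy--Schwarz as in \eqref{eq:int_Dfg}; this makes the equality condition $\proj_j^\bk f=0$ for $j<n$ transparent.
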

 
\begin{proof} 
These inequalities follow from the Parseval identity and \eqref{eq:int_Dfg}. Equality is attained in \eqref{eq:B-ineq1} for any 
$ f\in \CV_n^d(W_\bk)$ and in \eqref{eq:B-ineq1a} for $R_{e_1}^\bk$ in \eqref{eq:Re1}.
\end{proof}

The above corollary is derived using \eqref{eq:self-adj2}. We could also derive another set of inequalities using
\eqref{eq:self-adj3}, which we state in the following. 

\begin{cor} 
Let $d \ge 2$, $n = 0,1,2,\ldots$ and $f \in \Pi_n^d$. Then, for $1 \le \ell \le d$, 
\begin{align} \label{eq:B-ineq2}
    & \left \|\frac{\sqrt{x_\ell}}{\sqrt{1-x_\ell}}  (\la x ,\nabla \ra - \partial_\ell) f\right \|_{\bk,2}^2  + 
      \sum_{\substack{i=1 \\ i \ne \ell}}^d  \left \| \frac{\sqrt{x_i (1-|x|)}}{\sqrt{1- x_\ell}} \partial_i f \right\|_{\bk,2}^2 \\
    & \qquad\qquad\qquad\qquad
      +  \sum_{\substack{1 \le i < j \le d \\ i \ne \ell, j \ne \ell}}  \left \| \frac{\sqrt{x_i x_i}}{(1-x_\ell)} \partial_{i,j} f \right\|_{\bk,2}^2 \le 
            n(n+|\bk|+d) \|f\|_{\bk,2}^2, \notag
\end{align}
which is sharp and so is the inequality
\begin{align} \label{eq:B-ineq2a}
\left \|\frac{\sqrt{x_\ell}}{\sqrt{1-x_\ell}}  (\la x ,\nabla \ra - \partial_\ell) f\right \|_{\bk,2} \le \sqrt{n(n+|\bk|+d)}  \|f\|_{\bk,2}. 
\end{align}
\end{cor}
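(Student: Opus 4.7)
The plan is to run the argument of Theorem~\ref{thm:B-ineq0} essentially verbatim, with the alternative self-adjoint identity \eqref{eq:self-adj3} taking the place of \eqref{eq:adj-int}. Setting $g=f$ in \eqref{eq:self-adj3} turns each summand on the right-hand side into the squared weighted $L^2(W_\bk)$-norm of the corresponding fractional prefactor times a derivative of $f$, so that the entire left-hand side of \eqref{eq:B-ineq2} equals
$$
   - b_\bk \int_{\triangle^d} \CD_\bk f(x)\cdot f(x)\, W_\bk(x)\,\d x.
$$
Expanding $f=\sum_{j=0}^n \proj_j^\bk f$, invoking $\CD_\bk \proj_j^\bk f=-\l_j^\bk \proj_j^\bk f$ with $\l_j^\bk=j(j+|\bk|+d)$, and using Parseval together with the monotonicity $\l_0^\bk<\l_1^\bk<\cdots<\l_n^\bk$ yields
$$
  - b_\bk \int_{\triangle^d} \CD_\bk f\cdot f\, W_\bk\,\d x
     = \sum_{j=0}^n \l_j^\bk \|\proj_j^\bk f\|_{\bk,2}^2 \le \l_n^\bk \|f\|_{\bk,2}^2,
$$
which is precisely \eqref{eq:B-ineq2}. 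Inequality \eqref{eq:B-ineq2a} then follows immediately by discarding the nonnegative terms on the left other than the first.

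For the sharpness of \eqref{eq:B-ineq2}, the chain above becomes an equality whenever $f\in\CV_n^d(W_\bk)$, because only the $j=n$ summand in the Parseval expansion contributes. For the sharpness of \eqref{eq:B-ineq2a}, I would exhibit an $f\in\CV_n^d(W_\bk)$ that depends only on $x_\ell$, so that every term on the left of \eqref{eq:B-ineq2} other than the first is automatically zero. Following the simultaneous permutation argument already used in the proof of Theorem~\ref{thm:B-ineq0}, swapping $x_1\leftrightarrow x_\ell$ and $\k_1\leftrightarrow\k_\ell$ in the polynomial $P_{e_1}^\bk$ from \eqref{eq:Pe1} produces
$$
  P_{e_\ell}^\bk(x)=P_n^{(|\bk|-\k_\ell+d-1,\,\k_\ell)}(2x_\ell-1)\in\CV_n^d(W_\bk),
$$
which depends only on $x_\ell$. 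For this choice, $\partial_i P_{e_\ell}^\bk=0$ for $i\ne\ell$ and $\partial_{i,j}P_{e_\ell}^\bk=0$ whenever $i,j\ne\ell$, so that equality in \eqref{eq:B-ineq2} collapses to equality in \eqref{eq:B-ineq2a}.

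The only real subtlety is a bookkeeping one: carefully matching, term by term, the three sums on the right of \eqref{eq:self-adj3} (with $g=f$) to the three groups of squared norms on the left of \eqref{eq:B-ineq2}, making sure the fractional factor $1/(1-x_\ell)$ lands exactly where claimed. Beyond this verification, the argument is a direct transcription of the proof of Theorem~\ref{thm:B-ineq0}.
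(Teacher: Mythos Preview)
Your proposal is correct and follows essentially the same approach as the paper: setting $g=f$ in \eqref{eq:self-adj3}, then using the spectral decomposition and Parseval identity exactly as in Theorem~\ref{thm:B-ineq0}, and establishing sharpness of \eqref{eq:B-ineq2a} via the permuted Jacobi polynomial $P_{e_\ell}^\bk$ (the paper simply cites $P_{e_1}^\bk$ and relies on the permutation symmetry already established there). Your direct computation $-\langle\CD_\bk f,f\rangle_\bk=\sum_j\l_j^\bk\|\proj_j^\bk f\|_{\bk,2}^2$ is in fact slightly cleaner than the paper's route through the Cauchy--Schwarz step in \eqref{eq:int_Dfg}, but the argument is otherwise identical.
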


The identity in \eqref{eq:B-ineq2a} is attained for $P_{e_1}$ in \eqref{eq:Pe1}. For comparison, we state two more
inequalities that are consequences of \eqref{eq:B-ineq1} and \eqref{eq:B-ineq2}. 

\begin{cor}
Let $d \ge 2$, $n = 0,1,2,\ldots$ and $f \in \Pi_n^d$. Then, for $1 \le i \le d$,
\begin{align} \label{eq:B-ineq3}
   \max_{\substack{1 \le \ell \le d \\ \ell \ne i}} \left \| \frac{\sqrt{x_i  (1-|x|)}}{\sqrt{1- x_\ell}} \partial_i f \right\|_{\bk,2} 
        \le  \sqrt{n(n+|\bk|+d)}\|f\|_{\bk,2}.
\end{align}
Moreover, for $1 \le i,j \le d$ and $x_{d+1} = 1- |x|$, 
\begin{align} \label{eq:B-ineq4}
  \max_{\substack{1 \le \ell \le d +1\\ \ell \ne i, \, \ell \ne j}} 
  \left \| \frac{\sqrt{x_i x_j}}{\sqrt{1- x_\ell}} \partial_{i,j} f \right\|_{\bk,2} \le  \sqrt{n(n+|\bk|+d)}\|f\|_{\bk,2}.
\end{align}
\end{cor}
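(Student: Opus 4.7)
The plan is to observe that both inequalities in this corollary are direct consequences of the already-proven inequalities \eqref{eq:B-ineq1} and \eqref{eq:B-ineq2}, obtained simply by discarding non-negative summands on the left-hand side and then taking square roots. No fresh analytic input is required.

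For \eqref{eq:B-ineq3}, I would fix $1 \le i \le d$ and, for each admissible index $\ell$ (meaning $1 \le \ell \le d$ and $\ell \ne i$), apply \eqref{eq:B-ineq2} with that value of $\ell$. Since $i \ne \ell$, the middle sum on the left of \eqref{eq:B-ineq2} contains the summand $\bigl\|\tfrac{\sqrt{x_i(1-|x|)}}{\sqrt{1-x_\ell}}\partial_i f\bigr\|_{\bk,2}^2$. Retaining only this term and dropping all the other (non-negative) terms on the left yields the squared form of the required bound; taking square roots and then the maximum over $\ell$ produces \eqref{eq:B-ineq3}.

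For \eqref{eq:B-ineq4}, I may assume $i \ne j$ (otherwise $\partial_{i,j} = 0$ and the claim is trivial) and, by the antisymmetry $\partial_{i,j} = -\partial_{j,i}$, that $i < j$. Two sub-cases arise. If $1 \le \ell \le d$ with $\ell \ne i, j$, then applying \eqref{eq:B-ineq2} and retaining only the $(i,j)$-summand of its last sum gives the desired bound. If $\ell = d + 1$, then $1 - x_{d+1} = |x|$, so the target quantity coincides with $\bigl\|\tfrac{\sqrt{x_i x_j}}{\sqrt{|x|}}\partial_{i,j} f\bigr\|_{\bk,2}$, which is precisely one of the non-negative summands on the left of \eqref{eq:B-ineq1}; retaining it alone gives the bound. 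Maximizing over admissible $\ell$ completes the proof.

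Since the whole argument proceeds by term-dropping from stronger inequalities already in hand, I anticipate no real obstacle. The only small subtlety is recognizing that the case $\ell = d + 1$ in \eqref{eq:B-ineq4} must be supplied by \eqref{eq:B-ineq1} via the identification $1 - x_{d+1} = |x|$, while every other case is obtained directly from \eqref{eq:B-ineq2}, whose parameter $\ell$ only ranges from $1$ to $d$.
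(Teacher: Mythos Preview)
Your proposal is correct and matches the paper's own approach: the paper simply states that these two inequalities ``are consequences of \eqref{eq:B-ineq1} and \eqref{eq:B-ineq2}'' without further details, and your argument spells out exactly this term-dropping, including the observation that the $\ell=d+1$ case of \eqref{eq:B-ineq4} must come from \eqref{eq:B-ineq1} via $1-x_{d+1}=|x|$.
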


These inequalities are stronger than the classical inequalities in \eqref{eq:B-ineq0c} and \eqref{eq:B-ineq0d}. 
For $d = 2$, they are the Bernstein inequalities on the triangle that appeared in \cite{X23}.

\section{$L^p$ Bernstein inequalities for doubling weight}
\setcounter{equation}{0}

We prove Bernstein inequalities in the $L^p$ norm with respect to a doubling weight. The main result will
be stated and discussed in the first subsection. The proof follows the approach in \cite{X23}, which allows us 
to handle the doubling weight. In the second subsection, we verify that  $(\triangle^d, W_{\bk}, \sd)$ is
indeed a space of localized homogenous type. In the third subsection, we estimate the derivatives of the
localized kernel. The proof of Bernstein's inequality is given in the fouth subsection. 

\subsection{Bernstein inequalities}
\setcounter{equation}{0}

We first recall the definition of a doubling weight, which we restrict on the simplex. The distance function $\sd$ on
$\triangle^d$ is defined by
$$
  \sd_\triangle(x,y) = \arccos \left( \sqrt{x_1} \sqrt{y_1} + \cdots  \sqrt{x_d} \sqrt{y_d}+  \sqrt{1-|x|} \sqrt{1-|y|}\right). 
$$
For $x\in \triangle^d$ and $r > 0$, we denote a ball centered at $x$ with radius $r$ in $\triangle^d$ by $\sB(x,r)$ and
denote its measure with respect to a weight function $\sw$ by $\sw(B(x,r))$; then
$$
\sB(x,r):=\left \{y\in \triangle^d: \sd_\triangle(x,y) \le r\right\} \quad \hbox{and}\quad \sw(\sB(x,r)) := b_\bk \int_{\sB(x,r)} W_\bk(y) \d y.
$$  
A weight function $\sw$ is called a doubling weight on $\triangle^d$ if its satisfies the doubling condition:
 there exists a constant $c_\sw > 0$ such that
$$
  \sw(\sB(x,2 r)) \le c_\sw \, \sw(\sB(x,r)), \qquad \forall x\in \triangle^d, \,\,  0 < r \le r_0,
$$ 
where $r_0$ is the largest positive number such that $\sB(x,r) \subset \triangle^d$. For a doubling weight $\sw$, we 
denote the smallest $c_\sw$ by $c(\sw)$ and call it the {\it doubling constant}. We also call the positive number 
$\a(\sw)$ that satisfies
$$
\sup_{\sB(x,r) \subset \triangle^d} \frac{\sw(\sB(x, 2^m r))}{\sw(\sB(x,r))} \le c_w 2^{\a(\sw)} m, \quad m =0,1,2,\ldots,
$$
the doubling index. Iteration shows that $\a(\sw) \le \log_2 c(\sw)$. 

To state our Bernstein inequalities on the simplex, we introduce the following functions: for 
$x = (x_1,\ldots, x_d) \in \triangle^d$ and $1 \le i, j \le d$, 
$$
  \phi_{i,j} (x) := \frac{\sqrt{x_i} \sqrt{x_j}}{\sqrt{x_i+x_j}} \quad \hbox{and} \quad \phi_{i} (x) := \frac{\sqrt{x_i}\sqrt{1-|x|}}{\sqrt{x_i+ 1- |x|}}. 
$$
Moreover, for a doubling weight $\sw$, we let $\|\cdot\|_{\sw,p}$ denote the norm of $L^p(\triangle^d, \sw)$ for 
$1 \le p \le \infty$, where we adopt the convention that the norm becomes $\|\cdot\|_\infty$ when $p = \infty$ 
and the space becomes $C(\triangle^d)$. 

\begin{thm}\label{thm:B-Lp}
Let $\sw$ be a doubling weight on $\triangle^d$. For $n, r \in \NN_0$ and $1 \le p \le \infty$,
\begin{equation}\label{eq:B_p1}
   \left\| \phi_i^r \partial_i^r f \right\|_{\sw,p} \le c_p n^r \|f\|_{\sw,p},\quad 1 \le i \le d,
\end{equation} 
and 
\begin{equation}\label{eq:B_p2}
   \left\| \phi_{i,j}^r \partial_{i,j}^r f \right\|_{\sw,p} \le c_p n^r \|f\|_{\sw,p}, \quad 1 \le i < j \le d,
\end{equation} 
where $c_p$ is a constant depending only $\sw$, $d$ and $p$. 
\end{thm}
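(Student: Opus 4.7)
\medskip

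\noindent\textbf{Proposal.} The plan is to follow the kernel-based strategy introduced in \cite{X21, X23} for localizable spaces of homogeneous type. The two inequalities \eqref{eq:B_p1} and \eqref{eq:B_p2} have the same structure (only the direction of differentiation and the weight $\phi_i$ vs.\ $\phi_{i,j}$ change), so I will set them up in parallel and carry out the proof for \eqref{eq:B_p1}; the argument for \eqml{eq:B_p2} is verbatim the same once the analogous derivative estimate is in place. Iteration in $r$ reduces everything to the case $r=1$, so the real content is the first-order inequality.

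\medskip

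\noindent\textbf{Step 1: the space of homogeneous type.} First I would verify that $(\triangle^d, W_\bk, \sd_\triangle)$ is a space of homogeneous type in the sense of \cite{X21}: that $\sd_\triangle$ is a genuine metric, that $W_\bk$ is a doubling weight, and that the balls $\sB(x,r)$ satisfy $W_\bk(\sB(x,r))\sim r^d \prod_{i=1}^d(\sqrt{x_i}+r)^{2\k_i+1}(\sqrt{1-|x|}+r)^{2\k_{d+1}+1}$. This is exactly the ``localizable'' hypothesis that powers the maximal-function machinery. The doubling index $\a(\sw)$ of an arbitrary doubling weight $\sw$ then governs the integral kernels below. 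This step is conceptually the place promised in subsection 3.2 of the paper.

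\medskip

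\noindent\textbf{Step 2: highly localized polynomial kernels.} Next I construct, for each $n$, a positive polynomial kernel $L_n(x,y)$ of degree $\le 2n$ in each variable that reproduces $\Pi_n^d$, say
\begin{equation*}
  f(x) = b_\bk\int_{\triangle^d} L_n(x,y) f(y) W_\bk(y)\,\d y, \qquad f\in\Pi_n^d,
\end{equation*}
and that enjoys fast decay away from the diagonal in the metric $\sd_\triangle$: for every $\sigma>0$,
\begin{equation*}
  |L_n(x,y)| \le \frac{c_\sigma}{\sqrt{W_\bk(\sB(x,n^{-1}))}\sqrt{W_\bk(\sB(y,n^{-1}))}}(1+n\,\sd_\triangle(x,y))^{-\sigma}.
\end{equation*}
This kernel is built from a smooth cutoff of the reproducing kernel for $\CV_n^d(W_\bk)$; the pointwise estimate comes from the Jacobi addition formula on the simplex, exactly as in \cite{X21}. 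The same formula gives pointwise estimates for its partial derivatives, which is the crux of the next step.

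\medskip

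\noindent\textbf{Step 3: derivative estimates with the correct weight.} This will be the main technical step and the principal obstacle. The goal is to show
\begin{equation*}
  |\phi_i(x)\,\partial_{x_i} L_n(x,y)|
  \le \frac{c_\sigma\, n}{\sqrt{W_\bk(\sB(x,n^{-1}))}\sqrt{W_\bk(\sB(y,n^{-1}))}}(1+n\,\sd_\triangle(x,y))^{-\sigma},
\end{equation*}
and the analogous bound with $\phi_{i,j}\,\partial_{i,j}$. The non-trivial point is that the weight $\phi_i(x)=\sqrt{x_i(1-|x|)/(x_i+1-|x|)}$ is exactly the one adapted to the intrinsic metric: differentiating the addition formula picks up a factor of $n$ together with geometric quantities that combine with $\phi_i$ to give a scalar bound rather than a worse behaviour near the faces $x_i=0$ or $|x|=1$. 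Proving this requires writing $\phi_i\partial_i$ as a clean vector field in the $\sd_\triangle$-geometry (essentially a generator of the rotation that mixes $\sqrt{x_i}$ and $\sqrt{1-|x|}$), and then controlling each derivative of the Jacobi sum by one power of $n$ uniformly in $x,y$. The identity from Section~2 that $\phi_i^2\partial_i^2$ is part of a self-adjoint second-order operator with spectrum of order $n^2$ is a strong heuristic that the estimate has the right scaling; the task is to realise it at the kernel level.

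\medskip

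\noindent\textbf{Step 4: passage from kernel to inequality.} Given any polynomial $f$ of degree $\le n$, write $f=L_n\ast_\bk f$ and apply $\phi_i\partial_i$ under the integral to hit the kernel. The decay estimate in Step 3 shows that the operator $f\mapsto \phi_i\partial_i(L_n\ast_\bk f)$ has a kernel dominated, up to the factor $n$, by the localised kernel $L_n$. By the general machinery of \cite{X21} (a Schur test combined with the maximal function of a doubling weight), any such operator is bounded on $L^p(\sw)$ for $1\le p\le\infty$ with norm independent of $n$. This yields \eqref{eq:B_p1} for $r=1$; iterating by replacing $f$ with $\phi_i^{r-1}\partial_i^{r-1}f$ and projecting back into $\Pi_n^d$ (or equivalently applying the first-order inequality $r$ times after pulling the $\phi_i$ factors through, using that $\phi_i$ is smooth and bounded) gives the general $r$. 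The argument for \eqref{eq:B_p2} is identical, with $\phi_{i,j}\partial_{i,j}$ in place of $\phi_i\partial_i$; the relevant kernel derivative estimate is symmetric in $x_i,x_j$.
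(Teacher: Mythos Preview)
Your proposal follows essentially the same strategy as the paper: verify the localizable homogeneous-type structure, build the smoothed reproducing kernel $L_n^\bk$, estimate its derivatives, and then pass to $L^p$ via the maximal function $f^*_{\beta,n}$ of \cite{X21}. There is, however, one genuine technical gap in your Step~3 and a corresponding omission in Step~4.

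The derivative estimate you write down, namely that $\phi_i(x)\,|\partial_{x_i}L_n(x,y)|$ is bounded by $c_\sigma n$ times the localized kernel \emph{uniformly in $x$}, is stronger than what the paper actually proves (and likely stronger than what is true). What comes out of differentiating the addition formula is
\[
  |\partial_{i,j}L_n^\bk(x,y)| \le c_\gamma\,
  \frac{n^{d+1}\bigl(\sqrt{x_i+x_j}+n^{-1}\bigr)}
       {\sqrt{x_ix_j}\,\sqrt{W_\bk(n;x)}\sqrt{W_\bk(n;y)}\,(1+n\sd_\triangle(x,y))^{\gamma-2|\bk|-2}},
\]
so that after multiplying by $\phi_{i,j}(x)=\sqrt{x_ix_j}/\sqrt{x_i+x_j}$ one is left with an extra factor $1+(n\sqrt{x_i+x_j})^{-1}$. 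This is harmless only on the shrunken simplex $\triangle_{n,\delta}^d=\{x:\min_i x_i>\delta/n\}$. The paper therefore inserts an additional ingredient you do not mention: a Marcinkiewicz--Zygmund based lemma showing that $\|g\|_{\sw,p}\le c_\delta\|g\chi_{\triangle_{n,\delta}^d}\|_{\sw,p}$ for $g\in\Pi_{2n}^d$, which lets one discard the boundary strip before invoking the maximal-function bound. Without this step your Step~4 does not close.

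For the iteration, replacing $f$ by $\phi_i^{r-1}\partial_i^{r-1}f$ does not work directly since this is not a polynomial. The paper's device is cleaner: observe that $\phi_{i,j}^{(r-1)p}\sw$ is again a doubling weight (since $\phi_{i,j}\le 1$), write $\|\phi_{i,j}^r\partial_{i,j}^r f\|_{\sw,p}=\|\phi_{i,j}\partial_{i,j}(\partial_{i,j}^{r-1}f)\|_{\phi_{i,j}^{(r-1)p}\sw,\,p}$, and apply the $r=1$ case to the polynomial $\partial_{i,j}^{r-1}f$ with the new weight.
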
 

These inequalities are evidently stronger than the $L^p$ version of \eqref{eq:B-ineq0c} and \eqref{eq:B-ineq0d} in
the literature \cite{BX,Dit, DT}. For a geometric perspective, let us consider the distance function $\sd_{[0,1]}(x,y)$ 
on $[0,1]$, defined by 
$$
  \sd_{[0,1]}(x,y) = \arccos \left( \sqrt{x} \sqrt{y} + \sqrt{1-x}\sqrt{1-y}\right), \qquad x, y \in [0,1].
$$
For $x \in [0,1]$, it is well-known that the distance from $x$ to the boundary of $[0,1]$ is proportional to $\sqrt{x (1-x)}$. 
Indeed, using $1-\cos \t = 2 \sin^2 \frac{\t}{2} \sim \t^2$, we have $\sd_{[0,1]}(x,0) \sim \sqrt{x}$ and 
$\sd_{[0,1]}(x,1) \sim \sqrt{1-x}$; thus
$$
    \sqrt{x(1-x)}  \sim \min \{\sd_{[0,1]}(x,0), \sd_{[0,1]}(x,1)\}.
$$ 
For $x \in \triangle^d$, we evidently have $0 \le \frac{x_i}{x_i+x_j} \le 1$, so that 
$$
   \phi_{i,j}(x) =  \min \left\{ \sd_{[0,1]}\left( \frac{x_i}{x_i+x_j},0\right),  \sd_{[0,1]} \left( \frac{x_i}{x_i+x_j}, 1\right)\right\}. 
$$
Similarly, since $0 \le x_{i}  \le 1-|x| + x_i$, we also have 
$$
 \phi_{i}(x) \sim \min \left\{  \sd_{[0,1]}\left( \frac{x_i}{1-|x|+x_i},0\right ), 
 \sd_{[0,1]}\left( \frac{x_i}{1-|x|+x_i}, 1\right ) \right\}.
$$
Note that neither $\phi_{i,j}$ or $\phi_i$ is proportional to the Euclidean distance of $x$ to the boundary of $\triangle^d$. 

As another interesting aspect of the new inequalities in Theorem \ref{thm:B-Lp}, let us compare them when $p=2$ and $r=1$
with the inequalities
derived from the spectral operator in Subsection \ref{sect:2.2}. It is easy to verify that the inequalities in \eqref{eq:B_p1} 
and \eqref{eq:B_p2} agree with those in \eqref{eq:B-ineq3} and \eqref{eq:B-ineq4} when $d = 2$, but they differ when 
$d >2$. Moreover, for $d \ge 3$, $1 \le \ell \le d$ and $x \in \triangle^d$, 
$$
   1- x_\ell \ge \sum_{j=1, j \ne \ell}^d x_j  \ge x_i+x_j, \qquad i\ne \ell, \, j \ne \ell, 
$$
and $1-x_\ell \ge x_i + 1-|x|$ for $\ell \ne i$, which shows, for example, 
$$
  \max_{\substack{1 \le \ell \le d \\ \ell \ne i}} \left \| \frac{\sqrt{x_i  (1-|x|)}}{\sqrt{1- x_\ell}} \partial_i f \right\|_{\bk,2} 
        \le  \left \| \frac{\sqrt{x_i  (1-|x|)}}{\sqrt{1- |x| + x_i}} \partial_i f \right\|_{\bk,2}.
$$ 
Consequently, if we disregard the constant $c_p$ in the right-hand side, then the inequalities \eqref{eq:B_p1} and  \eqref{eq:B_p2}
are in fact stronger than those in \eqref{eq:B-ineq3} and \eqref{eq:B-ineq4} for $d \geq 3$, as the above inequality shows, 
which is somewhat unexpected. 

The proof of Theorem \ref{thm:B-Lp} will be given in the last subsection, after we establish the necessary 
estimates for highly localized kernels on the simplex. For the weight function $W_\bk$, there could be other
proofs based on changing variables and making use of results in one variable. Our proof has the advantage
that it works for all doubling weight functions and, as a byproduct, it shows that $W_\bk$ and the simplex
$\triangle^d$ fits into the general scheme developed in \cite{X21} for localizable space of homogeneous type. 

\subsection{Highly localized kernels} 
\setcounter{equation}{0}

The projection operator $\proj_n^\bk: L^2(W_\bk, \triangle^d) \mapsto \CV_n^d(W_\bk)$ is an integral operator 
$$
   \proj_n^\bk f(x) = b_\bk  \int_{\Delta^d} P^\bk_n(x,y) f(x,y) W_\bk(y) \d y,   
$$
where $P_n^\bk(\cdot, \cdot)$ is the reproducing kernel of $\CV_n^d(W_\bk)$, which ensures $\proj_n^\bk f = f$
for all $f\in \CV_n^d(W_\bk)$. For $\k_i \ge 0$, $1\le i \le d+1$, the reproducing kernel satisfies a closed-form formula 
\cite[Theorem 5.2.4]{DX}
\begin{equation}\label{eq:add-formula}
   P_n^\bk(x,y) = a_\bk \int_{-1}^1 Z_{n}^{(|\bk| + d-\f12, -\f12)} \left(2 \xi(x,y; t)^2 -1\right) \prod_{i=1}^{d+1} (1-t_i^2)^{\k_i - \f12} \d t,
\end{equation}
where $Z_n^{(\a,\b)}$ is a constant multiple of the Jacobi polynomial
$$
  Z_n^{(\a,\b)}(t) = \frac{P_n^{(\a,\b)}(1)P_n^{(\a,\b)}(t)}{h_n^{(\a,\b)}}
$$
and $\xi(x,y,t)$ is a function defined by 
$$
  \xi(x,y; t) = \sqrt{x_1 y_1} t_1 + \cdots +\sqrt{x_d y_d} t_d + \sqrt{x_{d+1} y_{d+1}} t_{d+1},
$$
and $a_\bk$ is the normalization constant that ensures $P_0(W_\bk; x,y) =1$, given by 
$$
  a_\bk = a_{\k_1} \ldots a_{\k_{d+1}} \quad \hbox{with} \quad a_\k =   \frac{\Gamma(\k+1)}{\sqrt{\pi} \Gamma(\k+\f12)}. 
$$
The formula \eqref{eq:add-formula} holds if one or more $\k_i$ is zero under the limit. 

Let $\wh a$ be an admissible cut-off function, defined as a $C^\infty(\RR_+)$ that has support 
$[0,2]$ and satisfies $\wh a(t) =1$ for $0\le t \le 1$. We consider localized kernels defined by 
\begin{equation*} 
L_n^\bk(x,y) := \sum_{j=0}^\infty \wh a \Big(\frac{j}{n}\Big) P_j^\bk (x,y).
\end{equation*}
Using the closed-form formula of the reproducing kernel, this kernel can be written as 
\begin{equation} \label{eq:kerLn^bk}
 L_n^\bk (x,y) = a_{\bk} \int_{[-1,1]^{d+1}} L_n^{(|\bk| + d-\f12, -\f12)}\left(2 \xi(x,y;t)^2 -1\right)
       \prod_{i=1}^{d+1} (1-t_i^2)^{\k_i - \f12} \d t,
\end{equation}
where $L_n^{(\a,\b)}$ denotes the localized kernel for the Jacobi polynomials 
$$
  L_n^{(\a,\b)} (t) = \sum_{j=0}^n \wh a \Big(\frac{j}{n}\Big) Z_j^{(\a,\b)}(t), \qquad t \in [-1,1], 
$$
which is highly localized and satisfies the lemma \cite[Theorem 2.6.7]{DaiX} below.

\begin{lem} \label{lem:Ln-est-1d}
Let $\ell$ be a positive integer and let $\eta$ be a function that satisfy, $\eta\in C^{3\ell-1}(\RR)$, 
$\mathrm{supp}\, \eta \subset [0,2]$ and $\eta^{(j)} (0) = 0$ for $j = 1,2,\ldots, 3 \ell-2$. Then, 
for $\a \ge \b \ge -\f12$, $t \in [-1,1]$ and $n\in \NN$, 
\begin{equation} \label{eq:DLn(t,1)}
\left| \frac{d^m}{dt^m} L_n^{(\a,\b)}(t) \right|  \le c_{\ell,m,\a}\left\|\eta^{(3\ell-1)}\right\|_\infty 
    \frac{n^{2 \a + 2m+2}}{(1+n\sqrt{1-t})^{\ell}}, \quad m=0,1,2,\ldots. 
\end{equation}
\end{lem}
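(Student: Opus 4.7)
The strategy splits naturally into two parts: reducing the $m$-th derivative case to $m=0$, and then handling $m=0$ by iterated summation by parts against the three-term recurrence of the Jacobi polynomials.

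For the reduction step, the classical Jacobi differentiation identity
$$ \tfrac{d}{dt} P_j^{(\a,\b)}(t) = \tfrac12 (j+\a+\b+1) P_{j-1}^{(\a+1,\b+1)}(t), $$
combined with the known formula for $h_j^{(\a,\b)}$, yields $\tfrac{d}{dt} Z_j^{(\a,\b)}(t) = q_j Z_{j-1}^{(\a+1,\b+1)}(t)$ with $q_j = O(j^2)$. Iterating $m$ times gives
$$ \frac{d^m}{dt^m} L_n^{(\a,\b)}(t) = \sum_{j\ge m} \wt\eta_j\, Z_{j-m}^{(\a+m,\b+m)}(t), $$
where $\wt\eta_j = \eta(j/n) \prod_{k=0}^{m-1} q_{j-k}$ is supported in $j\le 2n$ and carries an extra polynomial factor of size $O(n^{2m})$. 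This reduces the $m$-th derivative estimate to the $m=0$ estimate at shifted parameters $(\a+m,\b+m)$, with the prefactor $n^{2m}$ matching the exponent $2\a + 2m + 2$ in the claim.

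For the $m=0$ estimate I would split on the size of $u := n\sqrt{1-t}$. When $u \le 1$, the trivial pointwise bound $|Z_j^{(\a,\b)}(t)| \le Z_j^{(\a,\b)}(1) = O(j^{2\a+1})$, valid since $\a \ge \b \ge -\tfrac12$, yields $|L_n^{(\a,\b)}(t)| \le c\, n^{2\a+2}\|\eta\|_\infty$, which suffices because $(1+u)^{-\ell}$ is bounded away from zero in this regime. When $u > 1$, I would apply iterated summation by parts driven by the Jacobi three-term recurrence $t P_j^{(\a,\b)} = A_j P_{j+1}^{(\a,\b)} + B_j P_j^{(\a,\b)} + C_j P_{j-1}^{(\a,\b)}$, rearranged as
$$ (1-t) P_j^{(\a,\b)} = A_j \big( P_j^{(\a,\b)} - P_{j+1}^{(\a,\b)} \big) - C_j \big( P_{j-1}^{(\a,\b)} - P_j^{(\a,\b)} \big) + r_j P_j^{(\a,\b)} $$
with $r_j = 1 - A_j - B_j - C_j = O(1/j)$. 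Multiplying $L_n^{(\a,\b)}(t)$ by an appropriate power of $1-t$ and Abel-summing rewrites the result as a new series of the same form whose symbol is a finite difference of $\eta(j/n)$ at scale $1/n$, with boundary terms at $j = 0$ annihilated by the vanishing conditions on $\eta^{(j)}(0)$. Iterating appropriately produces $(1-t)^{\ell/2}|L_n^{(\a,\b)}(t)| \le c\, n^{2\a+2-\ell}\,\|\eta^{(3\ell-1)}\|_\infty$, which combined with the first regime gives the claimed $(1+u)^{-\ell}$ decay.

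The main obstacle is the careful bookkeeping of the iterated summation by parts. One must verify that each step legitimately trades a factor of $\sqrt{1-t}$ against $n^{-1}$, and that the effective cost per step is approximately three derivatives of $\eta$, which is what forces the hypothesis $\eta \in C^{3\ell-1}$. The factor of three arises because the rearranged recurrence produces three neighboring terms $P_{j\pm 1}^{(\a,\b)}, P_j^{(\a,\b)}$ with $j$-dependent coefficients that themselves must be differenced when absorbed into $\eta(j/n)$. Checking that all boundary contributions at $j = 0$ cancel at every stage, using precisely $\eta^{(j)}(0) = 0$ for $j = 1, \ldots, 3\ell - 2$ (with $\eta(0)$ free to be nonzero, as in the application to $\eta = \wh a$), is the technical crux.
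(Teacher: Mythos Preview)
The paper does not prove this lemma; it is quoted directly from \cite[Theorem~2.6.7]{DaiX} and used as a black box throughout Section~3. There is therefore no proof in the paper itself to compare against.

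Your sketch is essentially the argument that appears in the cited reference: reduce general $m$ to $m=0$ via the Jacobi differentiation identity $\tfrac{d}{dt}P_j^{(\a,\b)} = \tfrac12(j+\a+\b+1)P_{j-1}^{(\a+1,\b+1)}$ (picking up the factor $n^{2m}$), then iterate summation by parts against the three-term recurrence to convert smoothness of $\eta$ into decay in $1+n\sqrt{1-t}$. The outline is sound and standard. The one place that remains genuinely vague is the accounting you yourself flag at the end: your rearranged recurrence gains a full factor of $(1-t)$ per iteration, while second differences of $\eta(j/n)$ contribute $n^{-2}$, so the na\"ive count gives decay $(n\sqrt{1-t})^{-2}$ per step and hence order-$\ell$ decay from roughly $\lceil \ell/2\rceil$ iterations, fewer derivatives than the stated $3\ell-1$. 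The gap is absorbed by the lower-order remainder $r_j P_j^{(\a,\b)}$ and the $j$-dependence of $A_j, C_j$, whose repeated differencing is what drives the derivative count up; together with the verification that the boundary contributions at $j=0$ vanish under exactly the hypotheses $\eta^{(j)}(0)=0$ for $1\le j\le 3\ell-2$, this bookkeeping is the substance of the proof in \cite{DaiX}. Your sketch correctly identifies these as the crux but does not carry them out.
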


Let $B(x,r)$ be ball centered at $x$ and radius $r$ in $\triangle^d$. For $n \in \NN$, it is not difficult to see 
that, for $n = 1,2, 3, \ldots$ and $\k_i \ge 0$, $1 \le i \le d+1$, 
$$
    W_\bk (B(x,r)) = b_\bk \int_{B(x,r)} W_{\bk}(y) \d y \sim r^d  \prod_{i=1}^{d+1} (\sqrt{x_i}+r)^{2 \k_i+1}, \qquad x\in \triangle^d; 
$$
which follows, for example, from \cite[Lemma 11.3.6]{DaiX} by making a change of variables $x_j \mapsto \sqrt{x_j}$. 
For convenience, we introduce 
\begin{equation*}\label{def.WT}
W_\bk(n; x) :=  \prod_{i=1}^{d+1}  (x_i+n^{-2})^{\k_i+\f12}, \qquad x\in \triangle^d, 
\end{equation*}
so that $W_\bk (B(x,n^{-1})) \sim n^{-d} W_\bk(n;x)$. 

In \cite{X21}, highly localized kernels are considered on a space of homogeneous type $(\Omega, \sw, \sd)$, 
where $\Omega$ is a domain in $\RR^d$, $\sw$ is a doubling weight defined on $\Omega$ in terms of the distance 
function $\sd$ on $\Omega$. They are defined as kernels that satisfy three assertions, the first two describe 
localization properties of the kernel and the third one is about the weight function. For the space 
$(\triangle^d, W_\bk, \sd_\triangle)$, the third property states
\begin{align}\label{eq:as3}
 \int_{\triangle^d} \f{ n^{d} W_\bk(y) dy}{W_{\bk} (n; y) (1+n\sd_\triangle (x, y))^{\g }} 
     \leq c_\bk, %\cdot  \sqrt{n(n+|\bk|+d)}.
\end{align}
which will be a consequence of Lemma \ref{lem:assertion3} below. The first localization property is the 
following proposition in \cite[Theorem 7.1]{IPX}. 

\begin{prop} \label{lem:local_est1}
Let $\wh a$ be an admissible cutoff function. Then for any $\g > 0$ there exists a constant 
$c_\g$ depending on $\g$, $\kb$, and $d$ such that  
\begin{equation}\label{eq:local1}
|L_n^\bk (x,y)| \le c_\g \frac{ n^d}{\sqrt{W_\bk(n; x)}\sqrt{W_\bk(n; y)}\big(1 + n\sd(x, y)\big)^{\g} },
\quad x,y\in \triangle^d.
\end{equation}
\end{prop}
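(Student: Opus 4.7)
I would prove the estimate directly from the closed-form representation \eqref{eq:kerLn^bk}, first reducing to a pointwise bound on the one-dimensional Jacobi localized kernel inside the integral and then carefully estimating the resulting weighted integral over $[-1,1]^{d+1}$.

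The first step applies Lemma \ref{lem:Ln-est-1d} with $m=0$, $\a=|\bk|+d-\f12$, and $\b=-\f12$. Since $1-(2\xi^2-1) = 2(1-\xi^2)$ and the cutoff $\wh a$ is $C^\infty$, the parameter $\ell$ there may be taken arbitrarily large to obtain
\begin{equation*}
\big|L_n^{(\a,\b)}\big(2\xi(x,y;t)^2-1\big)\big| \le c_\ell\,\frac{n^{2|\bk|+2d+1}}{\big(1+n\sqrt{1-\xi(x,y;t)^2}\big)^\ell}.
\end{equation*}
Substituting this into \eqref{eq:kerLn^bk} reduces the task to bounding, uniformly in $x,y\in\triangle^d$, the integral
\begin{equation*}
\CJ_\ell(x,y) := \int_{[-1,1]^{d+1}} \frac{\prod_{i=1}^{d+1}(1-t_i^2)^{\k_i-\f12}\,\d t}{\big(1+n\sqrt{1-\xi(x,y;t)^2}\big)^\ell}
\end{equation*}
by $c\,n^{-(2|\bk|+d+1)}\big(W_\bk(n;x) W_\bk(n;y)\big)^{-\f12}\big(1+n\sd_\triangle(x,y)\big)^{-\g}$.

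The second step links $\xi$ with $\sd_\triangle$. At $t=\mathbf{1}=(1,\ldots,1)$ we have $\xi(x,y;\mathbf{1})=\cos\sd_\triangle(x,y)$, so $\sqrt{1-\xi(x,y;\mathbf{1})^2}=\sin\sd_\triangle(x,y)$ is comparable to $\sd_\triangle(x,y)$ on $[0,\pi/2]$. For general $t$, the substitution $t_i=\cos\theta_i$ rewrites $\xi(x,y;t)=\sum_{i=1}^{d+1}\sqrt{x_iy_i}\cos\theta_i$, and a trigonometric subadditivity inequality then compares $\sqrt{1-\xi(x,y;t)^2}$ to $\sd_\triangle(x,y)$ plus a deviation that measures how far $\theta$ is from the origin.

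The main obstacle is the estimate of $\CJ_\ell$. My plan is to split $[-1,1]^{d+1}$ into the region in which each $1-t_i$ is small on the natural scale $n^{-2}/(x_iy_i+n^{-2})$ and its complement. On the former, the trigonometric comparison pulls $(1+n\sd_\triangle(x,y))^{-\g}$ out of the integrand, and the remaining integral is evaluated using the ball-measure estimate $W_\bk(\sB(x,n^{-1}))\sim n^{-d}W_\bk(n;x)$ in the form of the one-dimensional weight identity for the Jacobi weight, which produces the symmetric factor $\sqrt{W_\bk(n;x)W_\bk(n;y)}$. On the complementary region, the denominator is already of order $(1+n\,\mathrm{dev}(\theta))^\ell$ with $\mathrm{dev}(\theta)$ large, so a crude bound combined with the doubling property absorbs the extra factors. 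Since $\ell$ may be chosen as large as we wish, $\g$ is likewise arbitrary, completing the proof.
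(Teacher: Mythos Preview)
Your reduction is exactly the paper's: apply Lemma~\ref{lem:Ln-est-1d} with $m=0$ inside \eqref{eq:kerLn^bk} and reduce everything to bounding the weighted integral you call $\CJ_\ell$. The paper does not reprove the proposition but cites \cite[Theorem~7.1]{IPX}, and it records precisely the bound on $\CJ_\ell$ as the separate Lemma~\ref{lem:intLn}, which it then reuses in Propositions~\ref{lem:local_est2} and~\ref{deriv-est}.

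The only methodological difference is in how $\CJ_\ell$ is handled. The paper (following \cite{IPX}) avoids both the substitution $t_i=\cos\theta_i$ and the region-splitting you propose. Instead it uses the two elementary pointwise lower bounds displayed right after the statement,
\[
1-\xi(x,y;t)\ \ge\ \tfrac{2}{\pi^2}\,\sd_\triangle(x,y)^2,
\qquad
1-\xi(x,y;t)\ \ge\ \sum_{i=1}^{d+1}\sqrt{x_iy_i}\,(1-t_i),
\]
valid throughout $[-1,1]^{d+1}$. Splitting the exponent $\ell$ into two pieces, the first inequality pulls the factor $(1+n\sd_\triangle(x,y))^{-\g}$ out of the integral \emph{uniformly} (no near/far dichotomy needed), while the second, combined with $(1+n\sqrt{\sum_i a_i})^{d+1}\ge c\prod_i(1+n\sqrt{a_i})$, factorizes the remaining integral into $d+1$ one-variable Jacobi-weight integrals in the $t_i$; each contributes $n^{-2\k_i-1}(x_i+n^{-2})^{-(\k_i+\frac12)/2}(y_i+n^{-2})^{-(\k_i+\frac12)/2}$, producing $\sqrt{W_\bk(n;x)W_\bk(n;y)}$ and the power $n^{-2|\bk|-d-1}$ directly. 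Your plan would also work, but the unspecified ``trigonometric subadditivity'' and the ball-measure heuristic for the symmetric factor are doing the same job as these two inequalities, only less explicitly; replacing your last paragraph by them turns the sketch into a complete proof and gives you Lemma~\ref{lem:intLn} as a reusable byproduct.
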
 

The proof of this proposition relies on the estimate of \eqref{eq:DLn(t,1)} with $m=0$ and two lower bounds of
$1-\xi(x,y; t)$ (cf. \cite[p. 384]{IPX}),
$$
  1-\xi(x,y;t) \ge \f2{\pi^2} \sd_\triangle(x,y)^2 \quad \hbox{and} \quad 1-\xi(x,y;t)\ge \sum_{i=1}^{d+1} \sqrt{x_i y_i} (1-t_i).
$$
The key step in the proof of \eqref{eq:local1} is the following lemma, which we shall need later, and its proof is the 
central piece of the proof of Theorem 7.2 in \cite{IPX}; see the estimate of $J$ in the proof there. 

\begin{lem}\label{lem:intLn}
For $\k_i \ge 0$ and $\g \ge 2|\bk|$, 
\begin{align*}
\int_{[-1,1]^{d+1}}& \frac{1}{ [ 1+ n \sqrt{1-\xi(x,y;t)} ]^\g}
\prod_{i=1}^{d+1} (1-t_i^2)^{\k_i-\f12} dt  \\
    & \qquad \le c_\g  \frac{ n^{-2 |\bk|-d-1}}{\sqrt{W_\bk(n; x)}\sqrt{W_\bk(n; y)}(1+n\sd_\triangle(x, y))^{\g - 2|\bk|}}.
\end{align*}
\end{lem}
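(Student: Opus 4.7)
My plan is to use the two pointwise lower bounds for $1-\xi(x,y;t)$ mentioned in the text---namely $1-\xi \ge \frac{2}{\pi^2}\sd_\triangle(x,y)^2$ and $1-\xi \ge \sum_i \sqrt{x_iy_i}(1-t_i)$---to peel off the geodesic factor $(1+n\sd_\triangle)^{\g-2|\bk|}$ and then reduce the multivariate integral to a product of one-dimensional Jacobi integrals, each of which can be estimated by elementary scaling, in the spirit of \cite[Theorem 7.2]{IPX}.

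\textbf{Step 1 (peel off the distance factor).} Split the exponent as $\g = (\g-2|\bk|) + 2|\bk|$. The first lower bound gives
$$(1+n\sqrt{1-\xi})^{\g-2|\bk|} \ge c\,(1+n\sd_\triangle(x,y))^{\g-2|\bk|},$$
so this factor can be pulled outside the integral. The claim reduces to showing
$$\int_{[-1,1]^{d+1}} \frac{\prod_i(1-t_i^2)^{\k_i-1/2}}{(1+n\sqrt{1-\xi(x,y;t)})^{2|\bk|}}\,\d t \le \frac{c\,n^{-2|\bk|-d-1}}{\sqrt{W_\bk(n;x)W_\bk(n;y)}}.$$

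\textbf{Step 2 (factorize the weight exponent).} Apply the componentwise bound $1-\xi \ge \sqrt{x_iy_i}(1-t_i)$, which follows from the identity $1-\xi = (1-\cos\sd_\triangle) + \sum_j\sqrt{x_jy_j}(1-t_j)$. Writing $2|\bk| = \sum_i 2\k_i$ and applying the bound factor by factor yields
$$(1+n\sqrt{1-\xi})^{2|\bk|} \ge \prod_{i=1}^{d+1}(1+n(x_iy_i)^{1/4}\sqrt{1-t_i})^{2\k_i},$$
so the multivariate integral decouples into a product $\prod_i I_i$, where
$$I_i := \int_{-1}^{1}\frac{(1-t^2)^{\k_i-1/2}}{(1+n(x_iy_i)^{1/4}\sqrt{1-t})^{2\k_i}}\,\d t.$$

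\textbf{Step 3 (one-dimensional estimate).} For each $I_i$, substitute $u = \sqrt{1-t}$, then rescale $v = n(x_iy_i)^{1/4}u$, and split into the cases $n(x_iy_i)^{1/4}\lesssim 1$ and $n(x_iy_i)^{1/4}\gtrsim 1$. This produces
$$I_i \le \frac{c\,n^{-(2\k_i+1)}}{[(x_i+n^{-2})(y_i+n^{-2})]^{(\k_i+1/2)/2}},$$
and multiplying across $i = 1,\ldots,d+1$ reconstructs exactly $n^{-2|\bk|-d-1}/\sqrt{W_\bk(n;x)W_\bk(n;y)}$.

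\textbf{Main obstacle.} The delicate point is Step 3 when some $\k_i$ is small (in particular $\k_i = 0$): the naive per-variable denominator $(1+n(x_iy_i)^{1/4}\sqrt{1-t})^{2\k_i}$ degenerates to $1$, so the one-dimensional integral fails to generate the required $n^{-1}$ decay. To repair this, I would redistribute part of the geodesic exponent: for $\g$ sufficiently large (say $\g \ge 2|\bk|+d+1$), split $\g = (\g - 2|\bk| - d - 1) + (2|\bk| + d + 1)$, extract only $(1+n\sd_\triangle)^{\g-2|\bk|-d-1}$ in Step 1, and allocate the remaining $2|\bk|+d+1 = \sum_i(2\k_i+1)$ units across the $d+1$ one-dimensional integrals so that each has denominator exponent $2\k_i + 1$. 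Each refined one-dimensional integral then produces a genuine $n^{-1}$ per variable uniformly in $\k_i \ge 0$, and the product reproduces the target bound. Balancing this reallocation against the $(1+n\sd_\triangle)^{\g-2|\bk|}$ factor promised in the statement is the central bookkeeping of the proof.
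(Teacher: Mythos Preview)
Your overall strategy---peel off the distance factor and then factorize into one-dimensional Jacobi integrals---is exactly the argument in \cite{IPX} that the paper cites in lieu of a proof, and you correctly diagnose why the naive allocation of $2\k_i$ per coordinate fails. There is, however, a concrete gap in your fix. After giving each one-dimensional integral the exponent $2\k_i+1$, what you actually obtain is
\[
\int_{-1}^{1}\frac{(1-t^2)^{\k_i-1/2}}{(1+a_i\sqrt{1-t})^{2\k_i+1}}\,\d t \;\lesssim\; (1+a_i)^{-(2\k_i+1)},
\qquad a_i = n(x_iy_i)^{1/4},
\]
and this is \emph{not} the per-factor piece of the target, namely $\big[(1+n\sqrt{x_i})(1+n\sqrt{y_i})\big]^{-(\k_i+1/2)}$. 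Take $x_i\sim 1$ and $y_i=0$: then $a_i=0$ and the integral is $\sim 1$, while the required factor is $\sim n^{-(\k_i+1/2)}$. The factorization only sees the geometric mean $(x_iy_i)^{1/4}$, not $x_i$ and $y_i$ separately, so the product of your one-dimensional bounds does not ``reproduce the target bound'' as claimed.

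The missing ingredient is to trade the geometric mean for the separate factors using the distance. From $|\sqrt{x_i}-\sqrt{y_i}|\le\sd_\triangle(x,y)$ one checks the identity-plus-estimate
\[
(1+n\sqrt{x_i})(1+n\sqrt{y_i}) \;=\; (1+a_i)^2 + n\big(x_i^{1/4}-y_i^{1/4}\big)^2 \;\le\; (1+a_i)^2\big(1+n\sd_\triangle(x,y)\big),
\]
so each conversion costs a factor $(1+n\sd_\triangle)^{\k_i+1/2}$, for a total extra loss of $(1+n\sd_\triangle)^{|\bk|+(d+1)/2}$. This is the substance of the ``bookkeeping'' you allude to, and without it the argument does not close. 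It also means the method yields a distance exponent strictly worse than $\g-2|\bk|$; since in every application in the paper $\g$ is chosen as large as needed, this is harmless there, but your plan will not recover the stated exponent under the bare hypothesis $\g\ge 2|\bk|$.
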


%which leads to, using \eqref{eq:DLn(t,1)} with $m=0$, an upper bound for $L_n^\bk$, 
%$$
%  | L_n^\bk (x,y)| \le  c  \frac{n^{2 |\bk| +2d+1}}{\big(1+n\sd(x,y)\big)^{\g- 2|\bk|}}
%    \int_{[0,1]^{d+1}}  \frac{\prod_{i=1}^{d+1} (1-t_i^2)^{\k_i - \f12}}{\big(1+ n \sqrt{1-\xi(x,y;t)}\big)^{2|\bk|}} \d t,
%$$
%where we have changed the domain of the integral to $[0,1]^{d+1}$ by symmetry.  

The second localization property of the kernel on the simplex is the inequality below. 

\begin{prop} \label{lem:local_est2}
For $0 < \delta \le \delta_0$ with some $\delta_0<1$ and $z \in B(x, \frac{\delta}{n})$, 
\begin{equation}\label{eq:local2}
   \left|L_n^\bk(x,y) - L_n^\bk(z,y)\right| \le c_\g \frac{n^{d+1} \sd_\triangle(x,z)} 
        {\sqrt{W_\bk(n; x)}\sqrt{W_\bk(n; y)}\big(1 + n \sd_\triangle(x, y)\big)^{\g-2|\bk|-2}}.
\end{equation}
\end{prop}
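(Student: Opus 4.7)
The plan is to adapt the strategy that produced Proposition~\ref{lem:local_est1} by differentiating in the Jacobi variable one more time. Starting from the integral representation \eqref{eq:kerLn^bk}, write
\begin{align*}
L_n^\bk(x,y)-L_n^\bk(z,y)
= a_\bk \int_{[-1,1]^{d+1}} \bigl[L_n^{(\alpha,-\frac12)}(u_x)-L_n^{(\alpha,-\frac12)}(u_z)\bigr]\prod_{i=1}^{d+1}(1-t_i^2)^{\k_i-\frac12}\,dt,
\end{align*}
where $\alpha=|\bk|+d-\tfrac12$, $u_x=2\xi(x,y;t)^2-1$, and $u_z=2\xi(z,y;t)^2-1$. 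Apply the mean value theorem on the segment $u_s=(1-s)u_z+su_x$, so that the bracketed difference is bounded by $|u_x-u_z|\cdot\sup_s|L_n^{(\alpha,-\frac12)\prime}(u_s)|$. To estimate $|u_x-u_z|=2|\xi_x+\xi_z||\xi_x-\xi_z|\le 4|\xi_x-\xi_z|$, note that $\sqrt{X}:=(\sqrt{x_1},\ldots,\sqrt{x_{d+1}})$ and $\sqrt{Z}$ lie on the unit sphere with geodesic distance $\sd_\triangle(x,z)$, so Cauchy--Schwarz applied to $\xi_x-\xi_z=\sum_i(\sqrt{x_i}-\sqrt{z_i})\sqrt{y_i}t_i$ combined with $\sum_i(\sqrt{x_i}-\sqrt{z_i})^2=2(1-\cos\sd_\triangle(x,z))$ gives $|\xi_x-\xi_z|\le\sd_\triangle(x,z)$, hence $|u_x-u_z|\le 4\sd_\triangle(x,z)$.

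Next invoke Lemma~\ref{lem:Ln-est-1d} with $m=1$, which yields $|L_n^{(\alpha,-\frac12)\prime}(u_s)|\le c\,n^{2\alpha+4}(1+n\sqrt{1-u_s})^{-\ell}$ for any $\ell$ we choose. The main technical point is to compare $1-u_s=2((1-s)(1-\xi_z^2)+s(1-\xi_x^2))$ with $1-\xi(x,y;t)$, the quantity appearing in the hypothesis of Lemma~\ref{lem:intLn}. Since $|\xi_x^2-\xi_z^2|\le 4\sd_\triangle(x,z)\le 4\delta/n$, a case split handles this: when $1-\xi_x^2\ge 8\delta/n$ we get $1-u_s\ge\tfrac12(1-\xi_x^2)$; in the complementary regime $(1+n\sqrt{1-\xi(x,y;t)^2})$ is bounded by a constant depending on $\delta_0$, so the localization factor may be absorbed into the constant. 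Combined with the standard inequality $(1-\xi_x^2)\ge c(1-\xi(x,y;t))$ on the relevant part of the integration domain, this reduces the integral to a form fitting Lemma~\ref{lem:intLn}.

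Finally, bringing everything together, the integral against $\prod(1-t_i^2)^{\k_i-\frac12}$ is majorized by $c\sd_\triangle(x,z)\,n^{2\alpha+4}$ times the integral in Lemma~\ref{lem:intLn}, applied with the parameter shifted so that the surviving decay rate matches the required $\g-2|\bk|-2$. A concluding step replaces $W_\bk(n;z)$ and $\sd_\triangle(z,y)$ by $W_\bk(n;x)$ and $\sd_\triangle(x,y)$ using the standard comparisons $W_\bk(n;z)\sim W_\bk(n;x)$ and $1+n\sd_\triangle(z,y)\sim 1+n\sd_\triangle(x,y)$, both of which follow from $\sd_\triangle(x,z)\le\delta/n$ with $\delta\le\delta_0<1$ by the triangle inequality and a direct computation of $W_\bk(n;\cdot)$. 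The main obstacle is the comparison $1-u_s\sim 1-\xi(x,y;t)$ uniformly in the integration variable $t$; once that case analysis is dispatched, the rest is a bookkeeping exercise parallel to the proof of Proposition~\ref{lem:local_est1}.
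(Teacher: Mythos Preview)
Your proof has a genuine gap in the power counting, and the case split you propose cannot repair it.

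First, the claim that ``in the complementary regime $(1+n\sqrt{1-\xi(x,y;t)^2})$ is bounded by a constant depending on $\delta_0$'' is false: if $1-\xi_x^2<8\delta/n$ then $1+n\sqrt{1-\xi_x^2}$ is only bounded by $1+\sqrt{8\delta n}$, which grows like $\sqrt{n}$. So the localization factor cannot simply be absorbed on that set.

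More fundamentally, even granting the comparison $(1+n\sqrt{1-u_s})\sim(1+n\sqrt{1-\xi(x,y;t)})$, your Cauchy--Schwarz estimate $|\xi_x-\xi_z|\le\sd_\triangle(x,z)$ is one order too crude. Following your outline, the integrand is majorized by $c\,\sd_\triangle(x,z)\,n^{2|\bk|+2d+3}(1+n\sqrt{1-\xi(x,y;t)})^{-\ell}$; Lemma~\ref{lem:intLn} then produces
\[
c\,\sd_\triangle(x,z)\,\frac{n^{d+2}}{\sqrt{W_\bk(n;x)}\sqrt{W_\bk(n;y)}(1+n\sd_\triangle(x,y))^{\ell-2|\bk|}},
\]
which carries $n^{d+2}$ rather than the required $n^{d+1}$. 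You cannot trade this extra $n$ against $\sd_\triangle(x,z)\le\delta/n$ without destroying the explicit $\sd_\triangle(x,z)$ factor demanded by \eqref{eq:local2}.

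The paper recovers the missing factor by \emph{not} bounding $|\xi_x-\xi_z|$ directly. Instead it writes
\[
\xi(x,y;t)-\xi(z,y;t)=\bigl[\cos\sd_\triangle(x,y)-\cos\sd_\triangle(z,y)\bigr]-\sum_{i=1}^{d+1}(\sqrt{x_i}-\sqrt{z_i})\sqrt{y_i}(1-t_i).
\]
The cosine difference is bounded by $\sd_\triangle(x,z)\bigl(\sd_\triangle(x,z)+\sd_\triangle(v,y)\bigr)$ with $v\in\{x,z\}$, which is $\sd_\triangle(x,z)$ times a factor controlled by $n^{-1}(1+n\sd_\triangle(v,y))$; that extra $n^{-1}$ is exactly what brings $n^{d+2}$ down to $n^{d+1}$, at the cost of one unit in the decay exponent. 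The remaining sum carries explicit $(1-t_i)$ factors, which are absorbed by shifting $\k_i\mapsto\k_i+1$ in Lemma~\ref{lem:intLn}; the weight shift supplies the compensating power of $n$ after using $n^{-1}\sqrt{y_i}\le(\sqrt{x_i}+n^{-1})(\sqrt{y_i}+n^{-1})$. This two-piece decomposition---separating the $t$-independent part from the part with visible $(1-t_i)$ weights---is the idea your sketch is missing.
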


\begin{proof}
Denote the left-hand side of \eqref{eq:local2} by $K$. Let $\partial L(u) = L'(u)$ and let $I_t$ denote the interval 
that has the endpoints $\xi(x,y;t)$ and $\xi(z,y;t)$. Then, by \eqref{eq:kerLn^bk},
\begin{align} \label{eq:Ln-Ln1}
K \le 2 a_\bk \int_{[-1,1]^{d+1}} L_n((x,y),(z,y);t) \left |\xi(x,y;t)^2 - \xi(z,y;t)^2\right| \prod_{i=1}^{d+1} (1-t_i^2)^{\k_i - \f12} \d t, 
\end{align}
where 
\begin{align*}
 L_n((x,y),(z,y);t): = \big\| \partial L_n^{(|\bk|+d-\f12,-\f12)} \big(2(\cdot)^2-1\big)\big\|_{L^\infty(I_t)}.
 \end{align*}
Since $\max_{r\in I_v} |1+n \sqrt{1- r}|^{-\g}$ is attained at one of the endpoints of the interval, it follows from 
\eqref{eq:DLn(t,1)} with $m =1$ that 
 \begin{align*}
 L_n((x,y),(z,y);t) \le c \left[  \frac{n^{2 |\bk|+2d + 3}}{\big(1+n\sqrt{1-\xi(x,y;t)^2} \big)^{\g}} +  
     \frac{n^{2 |\bk|+2d + 3}}{\big(1+n\sqrt{1-\xi(z,y;t)^2}\big)^{\g}} \right].
\end{align*}
Using $|\xi(x,y; t)| \le 1$, we obtain $|\xi(x,y;t)^2 - \xi(z,y;t)^2| \le 2 |\xi(x,y;t)- \xi(z,y; t)|$. Since $\xi(x,y;1) = \cos \sd(x,y)$, we 
can write 
\begin{align*}
 \xi(x,y;t)- \xi(z,y;t) \,& =  \sum_{i=1}^{d+1} \sqrt{x_i y_i} - \sum_{i=1}^{d+1} \sqrt{z_i y_i} - \sum_{i=1}^{d+1} \sqrt{x_i y_i} (1-t_i) 
    + \sum_{i=1}^{d+1} \sqrt{z_i y_i} (1-t_i) \\
    & =  \cos \sd_\triangle(x,y) - \cos \sd_\triangle(z,y) + \sum_{i=1}^{d+1} \left(\sqrt{x_i}  - \sqrt{z_i}\right) \sqrt{y_i}(1-t_i). 
\end{align*}
From the elementary identity
\begin{align*}
  \cos \t-\cos \phi= 2 \sin \frac{\t - \phi}{2} \sin \frac{\t+\phi}2 
   = 2 \sin \frac{\t - \phi}{2} \left( 2 \sin \frac{\t}{2} + \sin \frac{|\t-\phi|}{2}\right),
\end{align*} 
it follows readily that $ |\cos \t -  \cos \phi| \le |\t - \phi| \left ( |\t| + \tfrac 12 |\t - \phi| \right)$. Hence, using the triangle 
inequality for the distance $\sd(\cdot,\cdot)$, we obtain  
$$
   |\cos  \sd_\triangle(x,y) - \cos  \sd_\triangle(z,y)| \le \sd_\triangle(x,z) 
       \left ( \sd_\triangle(x,z) +  \sd_\triangle(v,y)\right) \quad \hbox{with $v=x$ or $v=z$}.
$$
For $1 \le i \le d+1$, the Cauchy-Schwarz inequality implies that (\cite[(7.5)]{IPX})
$$
   \left| \sqrt{x_j} - \sqrt{y_j} \right| \le  \sd_\triangle(x,y), \quad x, y \in \triangle^d.
$$
Consequently, we conclude that 
$$
 |\xi(x,y;t)- \xi(z,y;t)| \le   \sd_\triangle(x,z) \left[  \sd_\triangle(x,z) + \sd_\triangle(v,y)  + \sum_{i=1}^{d+1} \sqrt{y_i}(1-t_i) \right].
$$
Using these estimates on the right-hand side of \eqref{eq:Ln-Ln1}, we obtain
\begin{align*}
K \le c\,  \sd_\triangle(x,z) \int_{[-1,1]^{d+1}} & \left[  \frac{n^{2 |\bk|+2d + 3}}{\big(1+n\sqrt{1-\xi(x,y;t)^2} \big)^{\g}} +  
     \frac{n^{2 |\bk|+2d + 3}}{\big(1+n\sqrt{1-\xi(z,y;t)^2}\big)^{\g}} \right] \\
    & \times \left(  \sd_\triangle(v,y) + \sd_\triangle(x,z)  + \sum_{j=1}^{d+1} \sqrt{y_j}(1-t_j) \right) \prod_{i=1}^{d+1} (1-t_i^2)^{\k_i - \f12} \d t.
\end{align*}
We write the righthand side as a sum according to the last bracket, so that 
$$
   K \le c\,  \sd_\triangle(x,z)\left(K_0 + \sum_{j=1}^{d+1} K_j \right).
$$
For $z \in B(x, \frac{\delta}{n})$, we have $ \sd_\triangle(x,z) + \sd_\triangle(v,y) \le c n^{-1}(1+ n \sd_\triangle(v,y))$. Hence, applying 
Lemma \ref{lem:intLn}, we obtain
\begin{align*}
  K_0\, & \le c \int_{[-1,1]^{d+1}}  \left[  \frac{n^{2 |\bk|+2d + 2}(1+ n  \sd_\triangle(x,y))}{\big(1+n\sqrt{1-\xi(x,y;t)^2} \big)^{\g}} 
     \prod_{i=1}^{d+1} (1-t_i^2)^{\k_i - \f12} \d t \right. \\ 
        & \qquad\qquad\qquad +  \left. \int_{[-1,1]^{d+1}}
           \frac{n^{2 |\bk|+2d + 2}(1+ n  \sd_\triangle(z,y))}{\big(1+n\sqrt{1-\xi(z,y;t)^2}\big)^{\g}} \right] 
   \prod_{i=1}^{d+1} (1-t_i^2)^{\k_i - \f12} \d t \\
 & \le c_\g  \frac{ n^{d+1}}{\sqrt{W_\bk(n; x)}\sqrt{W_\bk(n; y)}(1+n\sd_\triangle(x, y))^{\g - 2|\bk|-1}}.
\end{align*}
The estimate of $K_j$ also follows from Lemma \ref{lem:intLn} with $(1-t_j^2)^{\k_j-\f12}\d t_j$ replaced by
$(1-t_j^2)^{\k_j +\f12} \d t_j$; hence, let $e_j$ denotes the $j$-th coordinator vector of $\RR^{d+1}$, we obtain 
\begin{align*}
  K_j \, & \le c_\g \frac{ n^d \sqrt{y_j}} {\sqrt{W_{\bk+e_j} (n; x)}\sqrt{W_{\bk+e_j} (n; y)}(1+n \sd_\triangle(x, y))^{\g - 2|\bk|-2}} \\
            & \le c_\g \frac{ n^{d+1}} {\sqrt{W_{\bk} (n; x)}\sqrt{W_{\bk} (n; y)}(1+n \sd_\triangle(x, y))^{\g - 2|\bk|-2}},
\end{align*}
where we have used $n^{-1} \sqrt{y_j} \le (\sqrt{x_j} + n^{-1})(\sqrt{y_j} + n^{-1})$ in the second step. Putting these estimates
together proves \eqref{eq:local2}. 
\end{proof} 

With these localization properties established, we have verified that the kernels $L_n^\bk(\cdot,\cdot)$ are highly 
localized on the simplex. Thus, the space $(\triangle^d, W_\bk, \sd_\triangle)$ is a localizable space of 
homogeneous type as defined in \cite{X21}. From the general framework for such spaces, developed in \cite{X21}, 
a number of results on the simplex follow readily,  some of which will be used in the proof of Bernstein
inequalities in the last subsection. 

\subsection{Derivative of highly localized kernels}
Our proof of the Bernstein inequality requires the estimate of the derivatives of the highly localized kernel. 
Recall that $\partial_{i,j}:=\partial_{x_i}-\partial_{x_j}$. 

\begin{prop}\label{deriv-est}
Assume $\k_i \ge 0$, $1\le i \le d+1$ and $\g \ge 2|\bk|+2$. For $1\le i< j \le d+1$, 
\begin{align} \label{eq:deriv-est}
 |\partial_{i,j} L_n^\bk(x,y)|   \le  c_\g & \frac{ n^{d+1} ( \sqrt{x_i + x_j} + n^{-1})} { \sqrt{x_ix_j} \sqrt{W_{\bk} (n; x)}\sqrt{W_{\bk}(n; y)}
     (1+n\sd_\triangle(x, y))^{\g - 2|\bk|-2}},
%\\  + c'_\g\, & \frac{ n^{d+1}\sqrt{x_i+x_j}} { \sqrt{x_ix_j} \sqrt{W_{\bk} (n; x)}\sqrt{W_{\bk} (n; y)}(1+n\sd_\triangle(x, y))^{\g - 2|\bk|-2}}.	 	 
\end{align} 
where $\partial_{i,j} = \partial_{x_i} - \partial_{x_j}$ for $1\le i, j \le d$ and $\partial_{i,d+1} = \partial_{x_i}$, 
and $x_{d+1} = 1-|x|$. 
\end{prop}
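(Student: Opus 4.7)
The plan is to differentiate the integral representation \eqref{eq:kerLn^bk} under the integral sign, compute $\partial_{i,j}\xi$ explicitly, and then reduce the estimate to the one-variable Jacobi kernel bounds in Lemma \ref{lem:Ln-est-1d} combined with the integral bound in Lemma \ref{lem:intLn}. With $\alpha=|\bk|+d-\tfrac12$ and $\beta=-\tfrac12$, the chain rule gives
$$\partial_{i,j}L_n^\bk(x,y)=a_\bk\int_{[-1,1]^{d+1}}4\xi\,\partial_{i,j}\xi\,(L_n^{(\alpha,\beta)})'(2\xi^2-1)\prod_{k=1}^{d+1}(1-t_k^2)^{\kappa_k-\tfrac12}\,dt.$$
Using $x_{d+1}=1-|x|$ so that $\partial_{x_k}x_{d+1}=-1$ for $1\le k\le d$, a direct computation (in which the $x_{d+1}$-contribution cancels in $\partial_{x_i}-\partial_{x_j}$ when $1\le i,j\le d$, and gives the unique term when $j=d+1$) shows
$$\partial_{i,j}\xi=\frac{t_i\sqrt{y_i}}{2\sqrt{x_i}}-\frac{t_j\sqrt{y_j}}{2\sqrt{x_j}}=\frac{t_i\sqrt{y_ix_j}-t_j\sqrt{y_jx_i}}{2\sqrt{x_ix_j}}$$
uniformly across the cases $1\le i<j\le d+1$. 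Cauchy--Schwarz combined with the simplex condition $y_i+y_j\le 1$ then yields $|\partial_{i,j}\xi|\le\sqrt{(x_i+x_j)/(2x_ix_j)}$, which supplies the factor $\sqrt{x_i+x_j}/\sqrt{x_ix_j}$ in the claim.

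To produce the claimed decay exponent $\gamma-2|\bk|-2$, I would split the integral into $t_i$- and $t_j$-pieces corresponding to the two terms in $\partial_{i,j}\xi$ and integrate by parts in the appropriate $t$-variable using the identity
$$t_i(1-t_i^2)^{\kappa_i-\tfrac12}=-\frac{1}{2\kappa_i+1}\frac{d}{dt_i}(1-t_i^2)^{\kappa_i+\tfrac12}.$$
Since $\partial_{t_i}\xi=\sqrt{x_iy_i}$, the integration by parts trades the singular factor $\sqrt{y_i}/\sqrt{x_i}$ for a bounded factor proportional to $y_i$, raises the weight exponent $\kappa_i\to\kappa_i+1$, and produces an integrand involving both $(L_n^{(\alpha,\beta)})'(2\xi^2-1)$ and $(L_n^{(\alpha,\beta)})''(2\xi^2-1)$ through the second derivative in $t_i$. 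Applying Lemma \ref{lem:intLn} with $\bk$ replaced by $\bk+e_i$ (and symmetrically with $\bk+e_j$ for the other piece) then delivers the $(1+n\sd_\triangle(x,y))^{-(\gamma-2|\bk|-2)}$ decay. The elementary inequality $y_i/\sqrt{y_i+n^{-2}}\le\sqrt{y_i+n^{-2}}\le c$ allows the extra $y_i$ factor to be absorbed into the shifted weight $\sqrt{W_{\bk+e_i}(n;y)}$, recovering $\sqrt{W_\bk(n;y)}$ in the denominator up to a constant.

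The regularized factor $\sqrt{x_i+x_j}+n^{-1}$ in the claim's numerator unites two contributions: the bulk estimate $\sqrt{x_i+x_j}/\sqrt{x_ix_j}$ coming from the Cauchy--Schwarz bound on $|\partial_{i,j}\xi|$, and an $n^{-1}/\sqrt{x_ix_j}$ piece emerging from the boundary regime (where $x_i$ or $x_j$ is of order $n^{-2}$) via the shifted weight $\sqrt{(x_i+n^{-2})(x_j+n^{-2})}$ appearing in the denominator after Lemma \ref{lem:intLn} is applied at the shifted parameters. The main obstacle is tracking the precise power of $n$: a naive bound via Lemma \ref{lem:Ln-est-1d} with $m=1$ (or $m=2$ after IBP) yields $n^{d+2}$, and obtaining the sharp $n^{d+1}$ requires exploiting the structure of the combination $(L_n^{(\alpha,\beta)})'+4\xi^2(L_n^{(\alpha,\beta)})''$ through the Jacobi derivative identity $(d/du)L_n^{(\alpha,\beta)}(u)=2(\alpha+1)\wt{L}_n^{(\alpha+1,\beta+1)}(u)$ (with shifted admissible cutoff), which represents both derivatives as shifted highly localized kernels of matching magnitude and permits a unified estimate via Lemma \ref{lem:intLn} at the correctly shifted parameters.
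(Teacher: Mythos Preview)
Your differentiation of \eqref{eq:kerLn^bk} and the formula for $\partial_{i,j}\xi$ are correct, but the integration-by-parts strategy has a power-of-$n$ gap that your proposed fix does not close. After IBP in $t_i$ you land on $y_i\,\bigl[4(L_n^{(\alpha,\beta)})'+16\xi^2(L_n^{(\alpha,\beta)})''\bigr](2\xi^2-1)$ against the shifted weight $(1-t_i^2)^{\kappa_i+1/2}$. The $m=2$ bound in Lemma~\ref{lem:Ln-est-1d} gives $n^{2|\bk|+2d+5}$, and Lemma~\ref{lem:intLn} at $\bk+e_i$ contributes $n^{-2|\bk|-d-3}$, so the product is $n^{d+2}$; the remaining factor $y_i/\sqrt{(x_i+n^{-2})(y_i+n^{-2})}$ does not supply an extra $n^{-1}$ (take $x_i,x_j,y_i$ all of order $1$). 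Rewriting $(L_n^{(\alpha,\beta)})''$ via the Jacobi derivative identity as a shifted kernel $\widetilde L_n^{(\alpha+2,\beta+2)}$ and applying the $m=0$ bound yields exactly the same $n^{2(\alpha+2)+2}=n^{2|\bk|+2d+5}$; there is no cancellation in the combination $(L_n')+4\xi^2(L_n'')=\partial_\xi^2[L_n(2\xi^2-1)]$, so ``matching magnitude'' is false and the identity buys nothing here. Likewise the Cauchy--Schwarz bound $|\partial_{i,j}\xi|\le\sqrt{(x_i+x_j)/(2x_ix_j)}$ followed by the $m=1$ bound and Lemma~\ref{lem:intLn} at $\bk$ also gives $n^{d+2}$.

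The paper avoids IBP entirely by the elementary rewrite $t_k=1-(1-t_k)$, which splits
\[
\partial_{i,j}\xi=\frac{\sqrt{y_ix_j}-\sqrt{x_iy_j}}{2\sqrt{x_ix_j}}
   -\frac12\Bigl(\sqrt{\tfrac{y_i}{x_i}}(1-t_i)-\sqrt{\tfrac{y_j}{x_j}}(1-t_j)\Bigr).
\]
The $(1-t_i)$ and $(1-t_j)$ pieces (after multiplying through by $\sqrt{x_ix_j}$) carry $\sqrt{x_jy_i}\,(1-t_i)$ and $\sqrt{x_iy_j}\,(1-t_j)$; the explicit $(1-t_k)$ factor shifts the weight exponent $\kappa_k\to\kappa_k+1$ in Lemma~\ref{lem:intLn} \emph{without} raising the derivative order, so only the $m=1$ bound $n^{2|\bk|+2d+3}$ enters and the arithmetic gives $n^{d+1}$. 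The $t$-independent piece $\sqrt{y_ix_j}-\sqrt{x_iy_j}$ is controlled via the distance inequality $|\sqrt{x_k}-\sqrt{y_k}|\le\sd_\triangle(x,y)$, yielding $|\sqrt{y_ix_j}-\sqrt{x_iy_j}|\le\sd_\triangle(x,y)\bigl(\sd_\triangle(x,y)+\sqrt{x_i}+\sqrt{x_j}\bigr)$; the extra $\sd_\triangle(x,y)$ is traded for $n^{-1}(1+n\sd_\triangle(x,y))$, which furnishes the missing $n^{-1}$ at the cost of one power of decay and simultaneously produces the numerator $\sqrt{x_i+x_j}+n^{-1}$.
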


\begin{proof}
Taking derivative in the $x$-variable of the identity for $L_n^\bk (\cdot,\cdot)$ in \eqref{eq:kerLn^bk} gives
\begin{align}\label{deriv-expression}
	\partial_{i,j} L_n^\bk(x,y)= a_\bk \int_{[-1,1]^{d+1}} & \partial L_n^{(|\bk|+d-\f12,-\f12)}\left(2\xi(x,y,t)^2-1\right) \\
	       & \times 4\xi(x,y,t) \cdot \partial_{i,j}\xi(x,y,t)  \prod_{i=1}^{d+1} (1-t_i^2)^{\k_i - \f12} \d t, \notag
\end{align} 
where $ \partial f(z)= \f \d{\d z} f(z)$. Let $\g$ be a positive integer. Applying \eqref{eq:DLn(t,1)} with $m =1$ gives 
\begin{align*}
 \left|  \partial L_n^{(|\k|+d-\f12,-\f12)}(2\xi(x,y,t)^2-1) \right |
       \le c_\g \frac{n^{2 |\bk|+2d + 3}}{\big(1+n\sqrt{1-\xi(x,y;t) } \big)^{\g}}.   
\end{align*}
Moreover, by the definition of $\xi(x,y; t)$, taking derivative and writing $t_i = 1-(1-t_i)$, we obtain 
\begin{align*}
	\partial_{i,j} \xi(x,y,t)\, & = \f12\sqrt{\f{y_i}{x_i}}t_i-\f12\sqrt{\f{y_{j}}{x_{j}}}t_{j} \\
     & = \f12\Big( \f{\sqrt{y_ix_{j}}-\sqrt{x_i y_{j}}}{\sqrt{x_ix_{j}}}\Big)-\f12 \Big( \sqrt{\f{y_i}{x_i}}(1-t_i) - \sqrt{\f{y_{j}}{x_{j}}}(1-t_{j})\Big).     
\end{align*}   
Together these estimates give an upper bound for the right-hand side of \eqref{deriv-expression}, 
\begin{align*} 
  & \left |\partial_{i,j}L_n^\bk(x,y)\right| 
	\leq c \int_{[-1,1]^{d+1}} \frac{n^{2 |\bk|+2d + 3}}{\big(1+n\sqrt{1-\xi(x,y;t) } \big)^{\g}} \\
 &\qquad \quad \times  \left[ \left| \f{\sqrt{y_ix_{j}}-\sqrt{x_i y_{j}}}{\sqrt{x_ix_{j}}} \right | + 
    \Big(\sqrt{\f{y_i}{x_i}}(1-t_i) - \sqrt{\f{y_{j}}{x_{j}}}(1-t_{j})\Big) \right] \prod_{i=1}^{d+1} (1-t_i^2)^{\k_i - \f12} \d t,
\end{align*}  	
which we rearrange as 
\begin{align*}  
 & \left| 2\sqrt{x_ix_j}\partial_{i,j}L_n^\bk(x,y) \right |\leq c \int_{[-1,1]^{d+1}} \frac{n^{2 |\bk|+2d + 3}}{\big(1+n\sqrt{1-\xi(x,y;t) } \big)^{\g}}\\
  &\qquad \times  \big[   \left | \sqrt{y_i}\sqrt{x_{j}}-\sqrt{x_i} \sqrt{y_{j}}\right | + \sqrt{{y_i}{x_{j}}}(1-t_i)  +  \sqrt{{y_{j}}{x_{i}}}(1-t_{j})  \big] 
      \prod_{i=1}^{d+1} (1-t_i^2)^{\k_i - \f12} \d t \\
  &\qquad\qquad\qquad\qquad \quad =: c \,(J_1 + J_2 + J_3),
\end{align*}  
where the sum is split according to the sum of three terms in the square bracket of the integrant. Thus, we need to prove that 
each $J_i$ is bounded by the right-hand side of \eqref{eq:deriv-est} multiplied by $\sqrt{x_ix_j}$. For 
$$
J_1 =\int_{[-1,1]^{d+1}} \frac{n^{2 |\bk|+2d + 3}}{\big(1+n\sqrt{1-\xi(x,y;t) } \big)^{\g}} 
 \left| \sqrt{y_i}\sqrt{x_{j}}-\sqrt{x_i} \sqrt{y_{j}}\right |  \prod_{i=1}^{d+1} (1-t_i^2)^{\k_i - \f12} \d t,
$$ 
we further write 	
\begin{align*}
 2(\sqrt{y_i}\sqrt{x_{j}}-\sqrt{x_i} \sqrt{y_{j}})=(\sqrt{y_i}-\sqrt{x_i})(\sqrt{y_j}+\sqrt{x_j})-(\sqrt{y_j}-\sqrt{x_j})(\sqrt{y_i}+\sqrt{x_i})
\end{align*}
and apply the inequality \cite[(7.5)]{IPX} 
$$
  \left|\sqrt{y_i}-\sqrt{x_i}\right |\leq \sd_\triangle (x, y), \quad x,y \in \triangle^d, \quad 1 \le i \le d+1,
$$ 
as well as 
$$
|\sqrt{y_i}+\sqrt{x_i}|=|\sqrt{y_i}-\sqrt{x_i}+2\sqrt{x_i}|\leq |\sqrt{y_i}-\sqrt{x_i}|+2\sqrt{x_i}\leq \sd_\triangle (x,y)+2\sqrt{x_i}
$$ 
to obtain 
\begin{align*}
 2|(\sqrt{y_i}\sqrt{x_{j}}-\sqrt{x_i} \sqrt{y_{j}})|&\leq  \sd_\triangle(x, y)(\sd_\triangle(x,y)+2\sqrt{x_j})
     + \sd_\triangle(x, y)(\sd_\triangle(x,y)+2\sqrt{x_i})\\
 & =2 \sd_\triangle(x,y)(\sd_\triangle(x,y)+\sqrt{x_i}+\sqrt{x_j})
\end{align*}
and use this inequality to derive an upper bound  
\begin{align*}
 J_1  &\leq c_\gamma \int_{[-1,1]^{d+1}} \frac{n^{2 |\bk|+2d + 3}\sd_\triangle(x,y)(\sd_\triangle
       (x,y)+\sqrt{x_i}+\sqrt{x_j})}{\big(1+n\sqrt{1-\xi(x,y;t) } \big)^{\g}}  \prod_{i=1}^{d+1} (1-t_i^2)^{\k_i - \f12} \d t  \\
      &  \le c_\g \frac{ n^{d+1}(\sd_\triangle (x,y)+\sqrt{x_i}+\sqrt{x_j})} {\sqrt{W_{\bk} (n; x)}\sqrt{W_{\bk} (n; y)}
          (1+n\sd_\triangle(x, y))^{\g - 2|\bk|-1}},
 \end{align*}
 where the second inequality follows from Lemma \ref{lem:intLn}. Applying the inequality 
 $$
 \sd_\triangle (x,y)+A \le n^{-1} [1 + n \sd_\triangle(x,y)] + A  \le ( n^{-1}+A) [1 + n \sd_\triangle(x,y)]
 $$
with $A =\sqrt{x_i}+\sqrt{x_j}$ and reducing the power of $1 + n \sd_\triangle(x,y)$ in the denominator by $1$, we 
see that $J_1$ has the desired upper bound. 
 
For the estimates of $J_2$ and $J_3$, we use Lemma \ref{lem:intLn} with $(1-t_j^2)^{\k_j-\f12}\d t_j$ replaced by 
$(1-t_j^2)^{\k_j +\f12} \d t_j$. Let $e_i$ denote the $i$-th coordinator vector of $\RR^{d+1}$. Then
\begin{align*}
J_2 \, & = \int_{[-1,1]^{d+1}} \frac{n^{2 |\bk|+2d + 3}}{\big(1+n\sqrt{1-\xi(x,y;t) } \big)^{\g}} 
   \sqrt{{y_i}{x_j}}(1-t_i) \prod_{k=l}^{d+1} (1-t_l^2)^{\k_l - \f12} \d t\\
  & \le c_\g \frac{ n^d \sqrt{y_i}\sqrt{x_j}} {\sqrt{W_{\bk+e_i} (n; x)}\sqrt{W_{\bk+e_i} (n; y)}(1+n\sd_\triangle(x, y))^{\g- 2|\bk|-2}} \\
	& \le c_\g \frac{ n^{d+1}\sqrt{ x_j+x_i}} {\sqrt{W_{\bk} (n; x)}\sqrt{W_{\bk} (n; y)}(1+n\sd(x, y))^{\g - 2|\bk|-2}},
\end{align*} 
where we have used $n^{-1} \sqrt{y_i} \le (\sqrt{x_i} + n^{-1})(\sqrt{y_i} + n^{-1})$ in the second step. 
The proof for $J_3$ follows similarly, just exchanging the indexes $i$ and $j$. Putting these estimates together 
proves \eqref{eq:deriv-est}.	
\end{proof}
 
\subsection{Proof of the Bernstein inequality in Theorem \ref{thm:B-Lp}} 
We will need the maximal function $f^*_{\b,n}$ defined by 
\begin{align*}
	f^*_{\b,n}(x)=\max_{y\in\triangle^d}\frac{|f(y)|}{(1+n\sd_\triangle(x,y))^\b}.
\end{align*}
The highly localized kernel guarantees, by \cite[Corollary 2.11]{X21}, the following property, where we recall
that $c(\sw)$ and $\a(\sw)$ are doubling constant and doubling index, respectively. 
 
\begin{prop}\label{prop:f_bn}
Let $\sw$ be a doubling weight on $\triangle^d$. If  $0<p\leq \infty$, $f\in \Pi(\triangle^d)$ and $\b>\f{\a(\sw)}{p}$, 
then 
$$
\|f\|_{\sw,p}\leq \|f^*_{\b,n}\|_{\sw,p}\leq c\|f\|_{\sw, p}
$$
where $c$ depends on $c(\sw)$ and on $\b$ when $\b$ is either large or close to $\a(\sw)/p$.
\end{prop}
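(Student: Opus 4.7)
The lower bound $\|f\|_{\sw,p} \le \|f^*_{\beta,n}\|_{\sw,p}$ is immediate: taking $y=x$ in the defining supremum gives $f^*_{\beta,n}(x) \ge |f(x)|$ pointwise on $\triangle^d$.

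For the upper bound, my plan is a Peetre--Nikolskii type argument. The central step is to establish a polynomial sub-mean value inequality: for any $r>0$ and any $f\in \Pi_n^d$,
$$
  |f(y)|^r \le \frac{c}{\sw(\sB(y,1/n))} \int_{\sB(y,1/n)} |f(z)|^r \sw(z)\, \d z, \qquad y\in \triangle^d.
$$
I would derive this from the reproducing formula $f(y) = b_\bk \int L_n^\bk(y,z) f(z) W_\bk(z)\, \d z$, the highly localized estimate of Proposition~\ref{lem:local_est1}, and a dyadic annular decomposition of $\triangle^d$ about $y$, following the general framework of \cite{X21}: because $L_n^\bk$ is essentially a Dirac at scale $1/n$, the values of a polynomial of degree at most $n$ are controlled by its $L^r$ average on a ball of that scale, insensitive to the particular doubling weight used. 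Combining the sub-mean value bound with the doubling comparison
$$
  \sw(\sB(x, \sd_\triangle(x,y)+1/n)) \le c(1+n\sd_\triangle(x,y))^{\alpha(\sw)}\, \sw(\sB(y,1/n)),
$$
(which follows from $\sB(x,\sd_\triangle(x,y)+1/n) \subset \sB(y, 2\sd_\triangle(x,y)+1/n)$ and iteration of the doubling condition) and enlarging the integration domain to the $x$-centred ball, I obtain the Peetre pointwise bound
$$
  |f(y)|^r \le c(1+n\sd_\triangle(x,y))^{\alpha(\sw)}\, \CM_\sw(|f|^r)(x),
$$
where $\CM_\sw$ is the Hardy--Littlewood maximal operator on $(\triangle^d, \sd_\triangle, \sw)$.

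Since $\beta > \alpha(\sw)/p$, one can choose $r$ with $\alpha(\sw)/\beta \le r < p$. Dividing the Peetre bound by $(1+n\sd_\triangle(x,y))^{\beta r}$ and taking the supremum over $y$ yields the pointwise domination $f^*_{\beta,n}(x) \le c\, [\CM_\sw(|f|^r)(x)]^{1/r}$, and the standard $L^{p/r}(\sw)$ boundedness of $\CM_\sw$ (valid because $p/r > 1$ and $\sw$ is doubling, via a Vitali covering argument) then delivers
$$
  \|f^*_{\beta,n}\|_{\sw,p}^r \le c\,\|\CM_\sw(|f|^r)\|_{\sw, p/r} \le c\,\|f\|_{\sw,p}^r.
$$
The main obstacle is the sub-mean value inequality for an arbitrary doubling $\sw$: one must convert the reproducing properties of $L_n^\bk$, which are naturally tied to the specific Jacobi weight $W_\bk$, into a pointwise bound against an $L^r(\sw)$ average for any doubling $\sw$. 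The localizable homogeneous-type structure of $(\triangle^d, W_\bk, \sd_\triangle)$ verified earlier in this subsection is precisely what makes this possible, and the threshold $\beta > \alpha(\sw)/p$ emerges naturally as the condition that lets $r$ be chosen simultaneously strictly above $\alpha(\sw)/\beta$ and strictly below $p$.
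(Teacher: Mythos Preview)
The paper does not supply its own proof of this proposition; immediately before the statement it simply says the result is guaranteed by \cite[Corollary 2.11]{X21}, invoking the general framework for localizable spaces of homogeneous type once the three kernel assertions (Propositions \ref{lem:local_est1}, \ref{lem:local_est2}, and \eqref{eq:as3}) have been verified on $\triangle^d$.

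Your outline is the standard Peetre--Nikolskii argument that underlies that cited corollary: the polynomial sub-mean value inequality, the doubling comparison to pass from $y$-centred to $x$-centred balls, the pointwise domination $f^*_{\beta,n}(x)\le c\,[\CM_\sw(|f|^r)(x)]^{1/r}$, and the $L^{p/r}(\sw)$ boundedness of $\CM_\sw$. The logic and the role of the threshold $\beta>\alpha(\sw)/p$ are correctly identified. The one place where your sketch is thin is precisely the step you flag --- getting the sub-mean value bound with respect to an \emph{arbitrary} doubling $\sw$ from a reproducing formula that is intrinsically tied to $W_\bk$. In \cite{X21} this is not done directly from the reproducing integral; rather, one first derives Marcinkiewicz--Zygmund inequalities (pointwise comparability of a degree-$n$ polynomial on balls of radius $\sim 1/n$) from the localized kernel estimates alone, and these MZ inequalities then yield the sub-mean value property for any doubling weight. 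So your plan is sound, but the route from ``$L_n^\bk$ is essentially a Dirac at scale $1/n$'' to the $\sw$-weighted sub-mean value inequality passes through the MZ machinery rather than a direct dyadic decomposition of the reproducing formula.
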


We will need two more lemmas. The case $p =2$ of the first lemma verifies \eqref{eq:as3}, the third assertion 
for highly localized kernels. 

\begin{lem}\label{lem:assertion3}
 Let $d\geq 2$. For $0< p < \infty$,  assume  $\g$ is large enough. Then for $x\in\triangle^d$, 
 \begin{align*}
  	 \int_{\triangle^d}\f{  W_\bk(y)   dy} {( {W_{\bk} (n; y)})^{\f p2}(1+n\sd_\triangle(x, y))^{\g p}}
	     \leq c\, n^{-d} (W_\bk(n;x)) ^{1-\f p2}.
\end{align*}
\end{lem}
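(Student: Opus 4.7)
Decompose the simplex into dyadic $\sd_\triangle$-annuli around $x$: set $A_0 = \sB(x,n^{-1})$ and $A_j = \sB(x, 2^j/n)\setminus \sB(x, 2^{j-1}/n)$ for $j\ge 1$. On $A_j$ one has $1+n\sd_\triangle(x,y)\sim 2^j$, so the integral in question is bounded by
$$
  c \sum_{j\ge 0} 2^{-j\gamma p}\int_{A_j}\frac{W_\bk(y)}{W_\bk(n;y)^{p/2}}\,dy,
$$
and $A_j$ is empty once $2^{j-1}/n$ exceeds the $\sd_\triangle$-diameter of $\triangle^d$, so the sum is effectively finite. The strategy is now to control each annular integral by a geometric factor in $j$ and close by taking $\gamma$ large.

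The key ingredient is a two-sided comparison for $W_\bk(n;\cdot)$ at distinct points. Combining the equivalence $W_\bk(n;z)\sim \prod_{i=1}^{d+1}(\sqrt{z_i}+n^{-1})^{2\kappa_i+1}$ with the inequality $|\sqrt{y_i}-\sqrt{x_i}|\le \sd_\triangle(x,y)$ recorded in \cite[(7.5)]{IPX} gives $\sqrt{y_i}+n^{-1}\le (1+n\sd_\triangle(x,y))(\sqrt{x_i}+n^{-1})$, together with the symmetric bound with $x,y$ interchanged. Raising to the power $2\kappa_i+1$ and multiplying over $i$ yields
$$
  c^{-1}(1+n\sd_\triangle(x,y))^{-(2|\bk|+d+1)}\le \frac{W_\bk(n;y)}{W_\bk(n;x)}\le c\,(1+n\sd_\triangle(x,y))^{2|\bk|+d+1};
$$
in particular $W_\bk(n;y)\sim W_\bk(n;z)$ whenever $\sd_\triangle(y,z)\le c_0/n$.

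Cover $A_j$ by balls $\sB(y_\alpha,c_0/n)$, $y_\alpha \in A_j$, of cardinality $O(2^{jd})$; this $O(2^{jd})$ covering number is the doubling dimension of $(\triangle^d,\sd_\triangle)$ with respect to Lebesgue measure and follows from the substitution $z_i=\sqrt{y_i}$, which sends $\sd_\triangle$-balls on the simplex to Euclidean-type balls on the positive octant of the unit sphere. Using the already-noted $W_\bk(\sB(y_\alpha,c_0/n))\sim n^{-d}W_\bk(n;y_\alpha)$ together with the local equivalence above, and applying the two-sided comparison (its upper half if $p\le 2$, its lower half if $p>2$), one obtains
$$
  \int_{A_j}\frac{W_\bk(y)}{W_\bk(n;y)^{p/2}}\,dy \le c\sum_\alpha n^{-d}W_\bk(n;y_\alpha)^{1-p/2}\le c\,n^{-d}\,2^{j[d+(2|\bk|+d+1)|1-p/2|]}W_\bk(n;x)^{1-p/2}.
$$
Plugging into the dyadic sum, the resulting geometric series converges once $\gamma p>d+(2|\bk|+d+1)|1-p/2|$, which is the claim. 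The main subtlety is handling both regimes $p\le 2$ and $p>2$ uniformly, which is exactly what forces the symmetric two-sided estimate on $W_\bk(n;\cdot)$; the earlier third assertion \eqref{eq:as3} corresponds to $p=2$, where only the one-sided bound would have been needed, so this lemma is genuinely stronger.
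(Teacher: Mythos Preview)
Your argument is correct and takes a genuinely different route from the paper's proof. The paper first reduces the general $p$ case to $p=2$ (invoking doubling, which in practice amounts to the same two-sided comparison of $W_\bk(n;\cdot)$ you derive), and then for $p=2$ makes the change of variables $y_i=u_i^2$ that identifies $(\triangle^d,\sd_\triangle)$ with the positive octant of the unit ball $(\BB^d_+,\sd_\BB)$, thereby reducing the problem to an already-established estimate on the ball \cite[Lemma 4.6]{PX}. By contrast, your dyadic-annulus/covering argument is entirely intrinsic to the simplex and handles all $0<p<\infty$ in one pass; it is more self-contained and makes the exponent loss explicit, whereas the paper's proof is shorter but depends on an external reference. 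Both approaches hinge on the same key fact --- the distance inequality $|\sqrt{y_i}-\sqrt{x_i}|\le\sd_\triangle(x,y)$ from \cite[(7.5)]{IPX} --- but you use it to compare $W_\bk(n;\cdot)$ at distant points, while the paper uses the structural identification of $\sd_\triangle$ with the ball distance to export the computation wholesale.
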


\begin{proof} 
The proof is similar to \cite[Lemma 4.14]{X21}. Let $J_p(x)$  denote the left-hand side of the first inequality. 
Using the doubling property of $W_\bk$, it is sufficient to consider the case $p=2$. Thus, it suffices to show, 
for $x,y\in\triangle^d$, 
\begin{align}\label{when p=2}
J_2(x):=  \int_{\triangle^d}\f{  W_\bk(y)} {{W_{\bk} (n; y)} (1+n\sd_\triangle(x, y))^{2 \g}} \d y \leq c\, n^{-d}.
\end{align}
From the definition of $W_{\bk}(y)$ and $W_\bk(n; y)$, it follows readily that
\begin{align*} 
 \frac{W_{\bk}(y) }{W_\bk(n; y)}\leq \f{(1-|y|)^{\k_{d+1}}}{(1-|y|+n^{-2})^{\k_{d+1}+\f12}}\cdot \f1{\sqrt{y_1\cdots y_d}}
\end{align*} 
Using this inequality in $J_2$ and changing variables $y_i=u_i^2$ for $1\leq i\leq d$ in the resulted integral, so that 
$\frac{\d y}{\sqrt{y_1\cdots y_d}}=\d u$, we obtain 
 \begin{align*} 
  J_2(x) & \le c \int_{\BB^d_+}\f{(1- \|u\|^2)^{\k_{d+1}}}{{( {1-\|u\|^2}+n^{-2})^{\k_{d+1} +\f12 }
     (1+n  \d_\triangle (x, u^2))^{2 \g  }} }\d u, 
\end{align*}
where $\BB_+^d = \{x\in \RR^d: x_i \ge 0, \|x\| \le 1\}$ is the positive quadrant of the unit ball $\BB^d$. 
Let $\sd_\BB(\cdot,\cdot)$ be the distance function on $\BB^d$. For $x\in \triangle^d$, let 
$x^2 = (x_1^2,\ldots,x_d^2) \in \BB_+^d$ and also let $\sqrt{x} = (\sqrt{x_1},\ldots, \sqrt{x_d})$. 
Then $\sd_\triangle (x^2,y^2) = \sd_\BB(x,y)$; see \cite[p. 403 and 404]{DaiX}. Hence, the last
displayed inequality can be written as
$$
  J_2(x) \le  c \int_{\BB^d_+}\f{  (1- \|u\|^2)^{\k_{d+1}}}{{( \sqrt{1-\|u\|^2}+n^{-1})^{2\k_{d+1} +1 } 
       (1+n \sd_\BB(\sqrt{x}, u))^{2 \g }} }  \d u.
$$ 
Enlarging the integral domain to the entire ball $\BB^d$ shows that $J_2(x)$ is bounded by
an integral on the unit ball, which has appeared and estimated in \cite[Lemma 4.6]{PX}, from which 
the inequality \eqref{when p=2} follows readily. 
\end{proof}
 
The second lemma relies on the Marcinkiewicz-Zygmund inequality, which requires a definition. 
Let $\ve > 0$. A finite subset $\Xi \subset \triangle^d$ is called $\ve$-separated if 
$\sd_\triangle(x,y) \geq \ve$ for every pair of distinct points $x,y \in \Xi$, and such a set is called 
maximal if, in addition, $\displaystyle \triangle^d = \cup_{y\in\Xi} \sB (y,\ve)$. 

\begin{lem} \label{lem:shrink}
Let $\sw$ be a doubling weight function on $\triangle^d$. For $\delta > 0$ and $n\in \NN,$ let 
$$
\triangle_{n,\delta}^d =\{x\in\triangle^d: \tfrac\delta{n}< x_i \leq 1-\tfrac{\delta}n, 1 \le i \le d+1\},
$$ 
where $x_{d+1} = 1-|x|$. Then, for $f\in\Pi_n(\triangle^d)$, $1\leq p< \infty$, %and every $\epsilon>0$,
\begin{align}\label{shrink}
   \int_{\triangle^d}|f(x)|^p \sw(x)\d x\leq c_{\delta}\int_{\triangle_{n,\delta}^d}|f(x)|^p \sw(x)\d x.
\end{align} 
Moreover, let $\chi_{n,d}$ be the characteristic function of $\triangle_{n,\delta}^d$; then for $p=\infty$,
\begin{align}\label{shrink2}
	\|f\|_\infty\leq c \|f\chi_{n,\delta}(x)\|_\infty.
\end{align}
\end{lem}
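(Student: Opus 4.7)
\emph{Plan.} My plan is to derive \eqref{shrink} via the Marcinkiewicz--Zygmund (MZ) discretization framework for the space $(\triangle^d, W_\bk, \sd_\triangle)$; the hypotheses for applying the general machinery of \cite{X21} were verified in the previous subsection through Propositions~\ref{lem:local_est1} and \ref{lem:local_est2}, and the maximal-function inequality of Proposition~\ref{prop:f_bn} is now at hand.

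I would begin by fixing a sufficiently small $\varepsilon>0$ (depending on $\delta$) and selecting a maximal $\varepsilon/n$-separated subset $\Xi\subset\triangle^d$. The MZ inequality yields
$$
\int_{\triangle^d}|f|^p\sw\,\d x\;\le\;c\sum_{\xi\in\Xi}|f(\xi)|^p\,\sw(\sB(\xi,\varepsilon/n)),\qquad f\in\Pi_n^d,
$$
so it suffices to dominate the right-hand side by $c_\delta\int_{\triangle_{n,\delta}^d}|f|^p\sw\,\d x$. I split $\Xi=\Xi_{\mathrm{in}}\sqcup\Xi_{\mathrm{bd}}$, where $\Xi_{\mathrm{in}}$ consists of those net points whose enclosing ball lies inside $\triangle_{n,\delta}^d$. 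The sum over $\Xi_{\mathrm{in}}$ is immediately controlled by the reverse MZ inequality restricted to $\triangle_{n,\delta}^d$.

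For $\xi\in\Xi_{\mathrm{bd}}$, I would associate a companion $\tau(\xi)\in\triangle_{n,\delta}^d$ obtained by pushing each coordinate $x_i(\xi)\le\delta/n$ up to $\delta/n$ and normalizing so that the resulting point remains in $\triangle^d$. Using the reproducing-kernel identity $f(\xi)=b_\bk\la L_n^\bk(\xi,\cdot),f\ra_\bk$ together with H\"older's inequality, the localized kernel estimate \eqref{eq:local1} with a sufficiently large $\gamma$, and Lemma~\ref{lem:assertion3}, I would establish the key comparison
$$
|f(\xi)|^p\,\sw(\sB(\xi,\varepsilon/n))\;\le\;c_\delta\int_{\sB(\tau(\xi),c/n)\cap\triangle_{n,\delta}^d}|f|^p\sw\,\d x.
$$
The doubling of $\sw$ and the observation that each $\tau(\xi)$ receives at most $c_\delta$ pre-images then allow me to sum over $\Xi_{\mathrm{bd}}$ and conclude \eqref{shrink}. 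The inequality \eqref{shrink2} for $p=\infty$ follows from the pointwise version of the same argument, replacing the weighted integrals by suprema.

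The main obstacle lies in this comparison step for $\xi\in\Xi_{\mathrm{bd}}$. Near a face of $\triangle^d$, the boundary strip has $\sd_\triangle$-width of order $\sqrt{\delta/n}$, which is much larger than the $1/n$ mesh, so a single application of the maximal function $f^*_{\beta,n}$ at $\tau(\xi)$ would incur a factor growing like $(n\delta)^{\beta/2}$ and thus fail to give an $n$-uniform constant. The resolution is to work with the reproducing-kernel representation directly: the polynomial decay in $(1+n\sd_\triangle)^\gamma$ of $L_n^\bk$, which can be made arbitrarily sharp, combined with the doubling of $\sw$ along the shift direction and the comparability $W_\bk(n;\xi)\sim W_\bk(n;\tau(\xi))$ on the relevant faces, produces a constant $c_\delta$ depending only on $\delta$, as required.
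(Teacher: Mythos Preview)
The paper's argument is much shorter and avoids the very difficulty you flag. It invokes the Marcinkiewicz--Zygmund inequality of \cite[Theorem~2.15]{X21} in its \emph{minimum} form
\[
\|f\|_{\sw,p}^p \;\le\; c\sum_{y\in\Xi}\sw\bigl(\sB(y,\tfrac{\delta}{n})\bigr)\,\min_{x\in\sB(y,\delta/n)}|f(x)|^p,
\]
rather than the pointwise form $\sum_\xi|f(\xi)|^p\sw(\sB(\xi,\varepsilon/n))$ that you use. With minima one may evaluate $|f|$ at \emph{any} point of each ball; taking a point of $\sB(y,\tfrac{\delta}{2n})$ lying in $\triangle_{n,\delta}^d$ and then averaging over that half-ball (via doubling) yields \eqref{shrink} directly --- no splitting $\Xi=\Xi_{\mathrm{in}}\sqcup\Xi_{\mathrm{bd}}$, no companion map $\tau$, and no kernel estimates. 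The case $p=\infty$ is simply quoted from \cite[(2.36)]{X21}.

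Your route has a genuine gap at the ``key comparison''
\[
|f(\xi)|^p\,\sw(\sB(\xi,\varepsilon/n))\;\le\;c_\delta\int_{\sB(\tau(\xi),c/n)\cap\triangle_{n,\delta}^d}|f|^p\,\sw\,\d x.
\]
This cannot come from the reproducing identity $f(\xi)=b_\bk\la L_n^\bk(\xi,\cdot),f\ra_\bk$ combined with \eqref{eq:local1} and Lemma~\ref{lem:assertion3}: those tools bound $|f(\xi)|$ by an integral of $|f|$ over \emph{all} of $\triangle^d$, not over a single ball of radius $c/n$, and there is no mechanism to discard the contribution outside that ball. The comparability $W_\bk(n;\xi)\sim W_\bk(n;\tau(\xi))$ you invoke is also false: if $\xi_i=0$ and $\tau(\xi)_i=\delta/n$, the factors $(\xi_i+n^{-2})^{\k_i+1/2}$ and $(\tau(\xi)_i+n^{-2})^{\k_i+1/2}$ differ by $(\delta n)^{\k_i+1/2}$, which is exactly the $\sqrt{\delta/n}$ versus $1/n$ scale mismatch you yourself pointed out. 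The min-form MZ inequality is precisely the device that lets the paper sidestep this comparison altogether.
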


\begin{proof}
We recall the Marcinkiewicz-Zygmund inequality on the simplex. Let $\Xi$ be a maximal 
$\ve$-separated set.  For $\ve = \f\delta n$ with $\delta>0$, the Marcinkiewicz-Zygmund inequality 
states \cite[Theorem 2.15]{X21} that, for $1\leq p < \infty$, there is a $\delta>0$ such that for $f\in\Pi_n$, 
\begin{align*} 
 \|f\|_{\sw, p}^p = \int_{\triangle^d}|f(x)|^p \sw(x)\d x \le c \sum_{y \in \Xi} \sw \left(\sB(y, \tfrac{\delta}{n})\right) 
    \min _{x \in \sB(y, \frac{\delta}{n})}|f(x)|^p .
\end{align*}
Hence, for a fixed $\delta>0$, by choosing $n$ sufficient large, we see that $f(x)=f(x)\chi_{n,\delta}$
for all $x \in \sB(y, \frac{\delta}{2n})$ and $y\in\Xi$. Consequently, for all $x\in \sB(y, \frac{\delta}{2n})$, 
 \begin{align*}
 \min _{x \in \sB(y, \frac{\delta}{n})}|f(x)|^p\leq  \min_{x \in \sB(y, \frac{\delta}{2n})}|f(x)|^p\leq |f(x)|^p\chi_{n,\delta}
\end{align*}
Using this inequality and the doubling property of $\sw$, we obtain 
\begin{align*}
 \|f\|_{\sw, p}^p & \leq c \sum_{y\in \Xi} \int_{\sB(y, \frac{\delta}{2n})} |f(x)\chi_{n,\delta}(x)|^p \d x 
     \leq c \int_{\triangle^d}|f(x)|^p\chi_{n,\delta}(x)\sw(x)\d x
\end{align*} 
for all $f\in\Pi_n$, which is the desired inequality \eqref{shrink}. The inequality \eqref{shrink2} 
follows immediately from \cite[(2.36)]{X21}. 
\end{proof}

\medskip\noindent
{\bf Proof of Theorem \ref{thm:B-Lp}}. 
Let $\partial_{i,d+1} = \partial_i$. By the definition of $L_n^\bk(x,y)$, every $f\in \Pi_n^d$ satisfies  
\begin{align*}
  f(x)=\int_{\triangle^d}f(y) L_n^\bk(x,y) W_\k(y)\d y.
\end{align*}
Taking $\partial_{i,j}$ derivative and using the maximal function $f^*_{\b,n}$, we obtain
\begin{align*}
   \left|\partial_{i,j}f(x)\right |\leq c f^*_{\b,n} (x) \int_{\triangle^d}\f{|\partial_{i,j} L_n^\bk(x,y) |}
          {(1+n\sd_\triangle (x,y))^\b}  W_\k(y)\d y.
\end{align*}
For $x \in \triangle^d_{n,\delta}$, we have $\f\delta n \le \sqrt{x_i+x_j}$, so that \eqref{eq:deriv-est} becomes
$$
 \left|\partial_{i,j} L_n^\bk(x,y) \right| \le c \f{ n^{d+1}  \sqrt{x_i+x_j}}
    {\sqrt{x_ix_j}  \sqrt{W_{\bk} (n; x)}\sqrt{W_{\bk} (n; y)}(1+n\sd_\triangle (x, y))^{\g - 2|\bk|-2}}.  
$$
Consequently, choosing $\g$ sufficiently large, it follows from Lemma \ref{lem:assertion3} with $p=1$ that 
$$
   \phi_{i,j}(x)  \int_{\triangle^d}\f{|\partial_{i,j} L_n^\bk(x,y) |}
          {(1+n\sd_\triangle (x,y))^\b}  W_\k(y)\d y \le c\,n^{d+1} n^{-d} = c\, n,
$$
which implies, in particular, that for $x\in \triangle_{n,\delta}$, 
$$
   \left | \phi_{i,j} \partial_{i,j} f(x) \right | \le c\, n f^*_{\b,n} (x). 
$$
Since $\phi_{i,j}(x) \le 1$, it is easy to see that $\phi_{i,j} \sw$ is also a 
doubling weight. Hence, recall $\chi_{n,\delta}(x)=\chi_{\triangle_{n,\delta}}(x)$ and choose $\delta > 0$
as in Lemma \ref{lem:shrink}, we obtain 
\begin{align*}
  \left\|\phi_{i,j} \partial_{i,j}f \right \|_{\sw, p}\leq c_\delta\| \phi_{i,j} \partial_{i,j} f\chi_{n,\delta} \|_{\sw,p}
   \le c\, n  \| f^*_{\b,n} \|_{\sw,p} \le c \, n \|f\|_{\sw,p},
\end{align*}
where the last step follows from Proposition \ref{prop:f_bn}. This proves Theorem \ref{thm:B-Lp} for $r =1$. 
The proof for $r > 1$ follows from iteration. Indeed, for fixed $i,j$ and $p$, denote by $\sw^*$ the 
doubling weight $\sw^*(x) = \phi_{i,j}^{(r-1)p} \sw(x)$. Then
$$
 \left \| \phi_{i,j}^r \partial_{i,j}^r f \right \|_{\sw,p} = 
   \left \| \phi_{i,j} \partial_{i,j} (\partial_{i,j}^{r-1} f )\right \|_{\sw^*,p} 
   \le c\, n \left\|\partial_{i,j}^{r-1} f\right \|_{\sw^*,p} =
    c\,n \left \|\phi_{i,j}^{r-1} \partial_{i,j}^{r-1} f \right \|_{\sw,p},
$$
which can be further iterated to complete the proof. 
\qed

\end{document}